 \def\NZQ{\mathbb}               
 \def\NN{{\NZQ N}}
 \def\ZZ{{\NZQ Z}}
 \def\frk{\mathfrak}               
 \def\pp{{\frk p}}
 \def\mm{{\frk m}}
 \def\nn{{\frk n}}
 \def\G{{\mathcal G}}
 \def\ab{{\mathbf a}}
 \def\opn#1#2{\def#1{\operatorname{#2}}} 
 \opn\chara{char} \opn\length{\ell} \opn\pd{pd} \opn\rk{rk}
 \opn\projdim{proj\,dim} \opn\injdim{inj\,dim} \opn\rank{rank}
 \opn\depth{depth} \opn\grade{grade} \opn\height{height}
 \opn\embdim{emb\,dim} \opn\codim{codim}
 \opn\Tr{Tr} \opn\bigrank{big\,rank}
 \opn\superheight{superheight}\opn\lcm{lcm}
 \opn\trdeg{tr\,deg}
 \opn\reg{reg} \opn\lreg{lreg} \opn\ini{in} \opn\lpd{lpd}
 \opn\size{size} \opn\sdepth{sdepth}
 \opn\link{link}\opn\fdepth{fdepth}\opn\lex{lex}
 \opn\tr{tr}
 \opn\div{div} \opn\Div{Div} \opn\cl{cl} \opn\Cl{Cl}
 \opn\Spec{Spec} \opn\Supp{Supp} \opn\supp{supp} \opn\Sing{Sing}
 \opn\Ass{Ass} \opn\Min{Min}\opn\Mon{Mon}
 \opn\Ann{Ann} \opn\Rad{Rad} \opn\Soc{Soc}
 \opn\Im{Im} \opn\Ker{Ker} \opn\Coker{Coker} \opn\Am{Am}
 \opn\Hom{Hom} \opn\Tor{Tor} \opn\Ext{Ext} \opn\End{End}
 \opn\Aut{Aut} \opn\id{id}
 \opn\nat{nat}
 \opn\pff{pf}
 \opn\Pf{Pf} \opn\GL{GL} \opn\SL{SL} \opn\mod{mod} \opn\ord{ord}
 \opn\Gin{Gin} \opn\Hilb{Hilb}\opn\sort{sort}
 \opn\PF{PF}\opn\Ap{Ap}
 \opn\aff{aff} \opn
\opn\relint{relint} \opn\st{st}
 \opn\lk{lk} \opn\cn{cn} \opn\core{core} \opn\vol{vol}  \opn\inp{inp} \opn\nilpot{nilpot}
 \opn\link{link} \opn\star{star}\opn\lex{lex}\opn\set{set}
 \opn\width{wd}
 \opn\Fr{F}
 \opn\QF{QF}
 \opn\G{G}
 \opn\type{type}\opn\res{res}
 \opn\gr{gr}
 \def\pot#1#2{#1[\kern-0.28ex[#2]\kern-0.28ex]}
 \opn\dirlim{\underrightarrow{\lim}}
 \opn\inivlim{\underleftarrow{\lim}}
 \let\iso=\cong
 \let\to=\rightarrow
 \def\Implies{\ifmmode\Longrightarrow \else
         \unskip${}\Longrightarrow{}$\ignorespaces\fi}
 \def\implies{\ifmmode\Rightarrow \else
         \unskip${}\Rightarrow{}$\ignorespaces\fi}
 \def\iff{\ifmmode\Longleftrightarrow \else
         \unskip${}\Longleftrightarrow{}$\ignorespaces\fi}
 \newtheorem{Theorem}{Theorem}[section]
 \newtheorem{Lemma}[Theorem]{Lemma}
 \newtheorem{Corollary}[Theorem]{Corollary}
 \newtheorem{Proposition}[Theorem]{Proposition}
 \newtheorem{Remark}[Theorem]{Remark}
 \newtheorem{Example}[Theorem]{Example}
 \newtheorem{Question}[Theorem]{Question}
 \let\epsilon\varepsilon
 \let\kappa=\varkappa
 \def\qed{\ifhmode\textqed\fi
       \ifmmode\ifinner\quad\qedsymbol\else\dispqed\fi\fi}
 \def\textqed{\unskip\nobreak\penalty50
        \hskip2em\hbox{}\nobreak\hfil\qedsymbol
        \parfillskip=0pt \finalhyphendemerits=0}
 \def\dispqed{\rlap{\qquad\qedsymbol}}
 \opn\dis{dis}
 \def\pnt{{\raise0.5mm\hbox{\large\bf.}}}
 \opn\Lex{Lex}
\begin{document}
\title {Canonical trace ideal and residue for numerical semigroup rings}

\author {J\"urgen Herzog, Takayuki Hibi and Dumitru I.\ Stamate}

\address{J\"urgen Herzog, Fachbereich Mathematik, Universit\"at Duisburg-Essen, Campus Essen, 45117
Essen, Germany} \email{juergen.herzog@uni-essen.de}

\address{Takayuki Hibi, Department of Pure and Applied Mathematics, Graduate School of Information Science and Technology,
Osaka University, Toyonaka, Osaka 560-0043, Japan}
\email{hibi@math.sci.osaka-u.ac.jp}

\address{Dumitru I. Stamate, ICUB/Faculty of Mathematics and Computer Science, University of Bucharest, Str. Academiei 14, Bucharest -- 010014, Romania }
\email{dumitru.stamate@fmi.unibuc.ro}

\dedicatory{ }

\begin{abstract}
For a numerical semigroup ring $K[H]$ we study the trace of its canonical ideal. The colength of this ideal is called the residue of $H$. This invariant measures how far is $H$ from being symmetric, i.e. $K[H]$ from being a Gorenstein ring. We remark that the canonical trace ideal contains the conductor ideal, and we study bounds for the residue. 

For $3$-generated numerical semigroups we give explicit formulas for the canonical trace ideal and the residue of $H$. Thus, in this setting we can classify those whose residue is at most one (the nearly-Gorenstein ones), and we show the eventual periodic  behaviour of the residue in a shifted family. 
\end{abstract}

\thanks{}

\subjclass[2010]{Primary 13H10, 20M10, 20M25; Secondary 13D02, 05E40}


\keywords{numerical semigroup, residue, canonical module, trace ideal, nearly Gorenstein,  conductor,  shifted family}

\maketitle
 
\section*{Introduction}
\label{sec:introd}

Let $(R,\mm, K)$ be a local ring (or a positively graded $K$-algebra) which is Cohen-Macaulay and possesses a canonical module $\omega_R$. In \cite{HHS} the trace ideal of $\omega_R$ is used as a tool to stratify the Cohen-Macaulay rings and  to define the class of nearly Gorenstein rings. We recall that if $N$ is any $R$-module, its trace is the ideal $\tr(N)=\sum_{\varphi\in \Hom_R(N,R)} \varphi(N)$ in $R$. 

The relevance of $\tr(\omega_R)$ (also called the canonical trace ideal of $R$) stems from the fact that it describes the non-Gorenstein locus of the ring $R$.
Namely, by \cite[Lemma 2.1]{HHS}, for any $\pp \in \Spec(R)$, $\pp \supseteq \tr(\omega_R)$ if and only if $R_\pp$ is not a Gorenstein ring. Thus $\tr(\omega_R)=R$ if and only if $R$ is a Gorenstein ring. In \cite{HHS}, the ring $R$ is called nearly Gorenstein when $\tr(\omega_R) \supseteq \mm$. Also, the residue of $R$, denoted $\res(R)$  is defined as the length of the module $R/\tr(\omega_R)$.  Several other invariants for such rings are surveyed in \cite{brennen-et-al}.

In this paper we study bounds, and in small codimension we give exact formulas, for $\res(R)$ when $R$ is the semigroup ring $K[H]$ associated to the numerical semigroup $H$ and the field $K$. This allows to determine the nearly Gorenstein property in some families of semigroups.

We outline the structure of the paper.
First, in Section~\ref{sec:prelim} we  transfer the terminology and notations from rings to the setting of numerical semigroups.
A numerical semigroup $H$ is a subsemigroup of $\NN$ containing $0$
such that the number of gaps $g(H)=|\NN \setminus H|$ is finite. The largest gap (i.e. positive integer not in $H$) is the Frobenius number $\Fr(H)$.
In Proposition \ref{prop:arithmetic-ng} we show that   if $H$ is generated by an arithmetic sequence, then $K[H]$ is nearly Gorenstein.

As a measure of how far  is $K[H]$ from being Gorenstein (equivalently, that $H$ is symmetric, cf. \cite{Kunz}), we introduce the residue of $H$ defined as
$$\res(H)=\dim_K  K[H]/\tr(\omega_{K[H]}).$$
Clearly, $\res(H)=0$ when $H$ is symmetric, and $\res(H)\leq 1$ precisely when $K[H]$ is nearly Gorenstein.
The exponents of the monomials in $\tr(\omega_{K[H]})$ form a semigroup ideal $\tr(H)\subseteq H$.
We note in Proposition \ref{prop:subsets-trace} that if  $H$ is not symmetric, then $\mathcal{C}_H \subseteq \tr(H) \subseteq H\setminus\{0\}$, where  $\mathcal{C}_H$
is the semigroup ideal generated by the elements of $H$ larger than $\Fr(H)$.

This observation gives a first estimate
$$\res(H)\leq n(H):= |\{ x\in H: x< \Fr(H)\}|$$ in Corollary \ref{cor:boundres}.
Examples computed with the NumericalSgps package \cite{Num-semigroup} in GAP \cite{GAP} indicate (Question \ref{que:g-n}) that another bound might also hold:
\begin{equation}
\label{eq:intro}
\res(H) \leq n(H)-g(H).
\end{equation}
This bound is proved to be correct if $K[H]$ is nearly Gorenstein, and also if $H$ is $3$-generated, cf. Proposition \ref{prop:3-bound}.

When $H$ is $3$-generated and not symmetric, the relation ideal $I_H \subset K[x_1, x_2, x_3]$ of $K[H]$ is given by the maximal minors   of the structure matrix of $H$,
which is of the form
\begin{eqnarray}
\label{intro-structure}
A=\left( \begin{array}{ccc} x_1^{a_1} & x_2^{a_2} & x_3^{a_3}\\
x_2^{b_2} & x_3^{b_3} & x_1^{b_1}
 \end{array}\right).
\end{eqnarray}
With this notation we derive  in  Proposition \ref{prop:3-semi-trace-formula} that $$\res(H)=\prod_{i=1}^3 \min\{a_i, b_i\}.$$

Working with the structure matrix of $H$ allows us to parametrize explicitly   the non-symmetric $3$-generated semigroups  $H$ whose trace is at either
end of the interval $[\mathcal{C}_H, H\setminus\{0\}]$, see Theorem \ref{thm:3semi-trace-is-maximal} and Proposition \ref{prop:3semi-trace-is-conductor}.

Example \ref{ex:trace-conductor} shows that $\res(H)$ may take any nonnegative integer value, even if we fix the number of generators of $H$.
Still, once we fix $n_1<\dots<n_e$, the residue of the semigroups in the  shifted family $\{ \langle n_1+j,\dots, n_e+j\rangle \}_{j\geq 0}$ 
seem to change  periodically with $j$, for $j\gg 0$.
This goes in the same direction as a recent number of other results about eventually periodic properties in this shifted family, see \cite{JS}, \cite{Vu}, \cite{HeS}, \cite{St-surveybetti}, \cite{C-all}, \cite{OP}.
Using  \cite{S-3semi}, we prove in Theorem  \ref{thm:3semi-res-periodic}
that given $n_1<n_2<n_3$ and letting $H_j=\langle n_1+j, n_2+j, n_3+j\rangle$ we have $\res(H_j)=\res(H_{j+(n_3-n_1)})$ for all $j\gg 0$.
In this setup, in Corollary \ref{cor:bound-res-shifts} we obtain another upper bound for $\res(H_j)$ when $j\gg 0$, depending on $n_3-n_1$.

In the Appendix we prove the inclusion of the conductor ideal in any trace ideal, and we characterize when the equality holds. This is made in the more general context of extensions of local rings $R\subseteq \widetilde{R}$  with isomorphic residue fields, $\widetilde{R}$ a discrete valuation ring in $Q(R)$ and a finite $R$-module..

\section{The canonical trace ideal of a semigroup, or Rings to semigroups transition}
\label{sec:prelim}

A numerical semigroup $H$ is a submonoid of $\NN$, and unless stated otherwise we assume $|\NN\setminus H| < \infty$.
Say $H$ is minimally generated by $n_1<n_2<\ldots <n_e$ with $e>1$. We write $H=\langle n_1,\ldots,n_e\rangle$. The number $e$ is called the {\em embedding dimension} of $H$ and the number $n_1$ the {\em multiplicity }of $H$.  One always has $n_1\leq e$. We say that $H$ has {\em minimal multiplicity} if $n_1=e$.
In this case, one also says that $H$ has {\em maximal embedding dimension}, cf. \cite{RoSa-book}.

 The elements  in the set $G(H)=\NN\setminus H$ are called the {\em gaps} of $H$.
As $|G(H)|<\infty$,  there exists a largest  integer  $\Fr(H)$, called the {\em Frobenius number} of $H$,  such that $\Fr(H)\not\in H$.

 We denote by $M$ the subset $H\setminus \{0\}$. The elements $f\in G(H)$  with $f+M\in H$ are called {\em pseudo-Frobenius numbers}.
The set of pseudo-Frobenius numbers will be denoted by $\PF(H)$. The cardinality of $\PF(H)$ is called the {\em type} of $H$, denoted $\type(H)$.

 We fix a field $K$. The positively graded $K$-subalgebra $K[H]=K[t^{n_1},\ldots,t^{n_e}]$ of $K[t]$ is the semigroup ring of $H$. Its graded maximal ideal is $\mm= (t^{n_1},\ldots,t^{n_e})$. The  embedding dimension (resp.\ multiplicity) of $H$ is also the  embedding dimension (resp.\ multiplicity) of $K[H]$ in the algebraic sense.
The polynomial ring $K[t]$ is a finite module over $K[H]$ and is the integral closure of $K[H]$ in its quotient field $Q(K[H])=K(t)$. The module $K[t]/K[H]$ has finite length and a  $K$-basis given by the residue classes of $\{t^a\: \; a\in G(H)\}$.


 The canonical module $\omega_{K[H]}$ of $K[H]$  is the fractionary $K[H]$-ideal generated by the elements $t^{-f}$ with $f\in \PF(H)$, see \cite[Exercise 21.11]{Eis}.
Therefore, the Cohen-Macaulay type of $K[H]$ is equal to $\type(H)$. In particular,  $K[H]$ is Gorenstein  if and only if $\PF(H)=\{\Fr(H)\}$.
Kunz \cite{Kunz} showed that $K[H]$ is Gorenstein if and only if $H$ is {\em symmetric}, i.e. for all $x\in \ZZ$ either $x\in H$, or $\Fr(H)-x \in H$.
The anti-canonical ideal of $K[H]$ is the fractionary ideal $\omega^{-1}_{K[H]}=\{x\in Q(K[H]): x\cdot \omega_{K[H]}\subseteq K[H] \}$.
Since $K[H]$ is a domain, by \cite[Lemma 1.1]{HHS} one has $\tr(\omega_{K[H]})=\omega_{K[H]}\cdot \omega_{K[H]}^{-1}$.

We mention that the almost Gorenstein numerical semigroup rings (as defined by Barucci and Fr\"oberg in \cite{BF}, see also \cite{GMT}) are a proper subclass of the nearly Gorenstein ones, by  \cite[Proposition 6.1]{HHS}. For our purposes,  we will take as definition for almost Gorensteinness Nari's characterization which we explain next. 

  Let $\PF(H)=\{f_1,\ldots,f_{\tau-1},\Fr(H)\}$, with $f_i<f_{i+1}$ for $1\leq i <\tau-2$. It is known by Nari \cite{Nari} that $K[H]$ is almost Gorenstein, if and only  if
\begin{equation}
\label{eq:ag-symmetries}
 f_i+f_{\tau-i}=\Fr(H)\quad \text{for}\quad i=1,\ldots, \lfloor \tau/2 \rfloor.
\end{equation}
The semigroup $H$ is called  {\em almost symmetric} if $K[H]$ is almost Gorenstein, and $H$ is called {\em nearly Gorenstein}, if $K[H]$ is nearly Gorenstein. These two classes of semigroups have been recently considered in \cite{MoSt}.

A subset $I\subset \ZZ$ is called a {\em relative ideal} of $H$ if $I+H \subseteq I$ and $h+I\subseteq H$ for some $h\in H$.
 If moreover $I\subseteq H$, then $I$ is called an {\em ideal} of $H$.

Let $\Omega_H$ and $\Omega_H^{-1}$ be the set  of exponents of the monomials in $\omega_{K[H]}$, and in $\omega_{K[H]}^{-1}$ respectively.
Then $\Omega_H$ and $\Omega_H^{-1}$ are relative  ideals of $H$ called  the canonical, respectively the anti-canonical ideal of $H$.
We define the {\em trace} of $H$ as $\tr(H)= \Omega_H+ \Omega_H^{-1}$.
It is clear that  $\tr(H)$  is an ideal in $H$  consisting  of the exponents of  the monomials in $\tr(K[H])$.

 In this notation,  $H$ is nearly Gorenstein if and only if $M\subseteq \tr(H)$.

\medskip
The semigroup ring $K[H]$ is $1$-dimensional, so its canonical trace ideal is either the whole ring, or it is an $\mm$-primary ideal.
Equivalently, $K[H]/\tr(\omega_{K[H]})$ is a finite dimensional vector space with a $K$-basis given by  $\{t^h: h\in H\setminus \tr(H) \}$.
We define the {\em residue} of $H$ as  the residue of $K[H]$, namely 
\begin{equation}
\label{eq:res-definition}
\res(H)=\dim_K K[H]/\tr(\omega_{K[H]})= | H\setminus \tr(H)|.
\end{equation}
Thus $\res(H)=0$ means that $H$ is symmetric, and $\res(H) \leq 1$ if and only if $H$ is nearly Gorenstein.

The conductor of the extension $K[H] \subseteq K[t]$ is the ideal
$$
\mathcal{C}_{K[t]/K[H]}=(t^h: h> \Fr(H))K[H],
$$ which explains why the quantity
$c(H):=\Fr(H)+1$ is named the conductor of $H$. We denote $\mathcal{C}_H=\{h: t^h\in \mathcal{C}_{K[t]/K[H]} \}$, which is an ideal in $H$ minimally generated by
$c(H), c(H)+1, \dots, c(H)+n_1-1$.
An important observation is that  $\tr(H)$ contains the conductor ideal $\mathcal{C}_H$. We skip the proof for now, since it follows from Proposition \ref{conductor} in the Appendix, where we consider a more general situation. 

\begin{Proposition}
\label{prop:subsets-trace} For any numerical semigroup $H$ one has
$$
\mathcal{C}_H \subseteq \tr(H) \subseteq H.
$$
If $H$ is not symmetric then $\mathcal{C}_H \subseteq \tr(H) \subseteq M$.
\end{Proposition}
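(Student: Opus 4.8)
The plan is to establish the three containments separately: the lower bound $\mathcal{C}_H\subseteq \tr(H)$, the upper bound $\tr(H)\subseteq H$, and the sharpening $\tr(H)\subseteq M$ under the non-symmetry hypothesis. For the lower containment I would simply invoke Proposition \ref{conductor} of the Appendix, exactly as announced in the paragraph preceding the statement. That result is proved for a general extension of the required type, and specializing it to $K[H]\subseteq K[t]$, whose conductor ideal is $\mathcal{C}_{K[t]/K[H]}$, yields $\mathcal{C}_H\subseteq \tr(H)$ after passing to exponents of monomials. This is the only containment carrying real content, and it is deliberately postponed.

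For the upper containment $\tr(H)\subseteq H$, I would use the identity $\tr(\omega_{K[H]})=\omega_{K[H]}\cdot \omega_{K[H]}^{-1}$ furnished by \cite[Lemma 1.1]{HHS} for the domain $K[H]$. By the very definition of the anti-canonical ideal $\omega_{K[H]}^{-1}=\{x\in Q(K[H]): x\cdot\omega_{K[H]}\subseteq K[H]\}$, this product lies inside $K[H]$, so $\tr(\omega_{K[H]})$ is an integral monomial ideal of $K[H]$; reading off exponents gives $\tr(H)\subseteq H$. This step is pure bookkeeping, already flagged as clear in the text.

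The substance of the second assertion is that $0\notin \tr(H)$ whenever $H$ is not symmetric, which upgrades $\tr(H)\subseteq H$ to $\tr(H)\subseteq M=H\setminus\{0\}$. I would argue by contraposition. If $0\in \tr(H)$, then the unit $1=t^{0}$ belongs to $\tr(\omega_{K[H]})$, and since the latter is an ideal this forces $\tr(\omega_{K[H]})=K[H]$. By the Gorenstein-locus description recalled in the Introduction (\cite[Lemma 2.1]{HHS}), the equality $\tr(\omega_{K[H]})=K[H]$ holds precisely when $K[H]$ is Gorenstein, which by Kunz's theorem \cite{Kunz} is equivalent to $H$ being symmetric. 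Hence non-symmetry of $H$ excludes $0\in\tr(H)$, and together with the previous paragraph this gives $\tr(H)\subseteq M$.

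I expect no serious obstacle in any of these steps: the only depth lies in the conductor containment, which is isolated and deferred to the Appendix. The remaining two inclusions reduce to the domain identity $\tr(\omega_{K[H]})=\omega_{K[H]}\,\omega_{K[H]}^{-1}\subseteq K[H]$ and to the standard chain of equivalences ``the canonical trace equals the whole ring $\iff$ $K[H]$ is Gorenstein $\iff$ $H$ is symmetric''.
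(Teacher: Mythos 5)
Your proposal is correct and follows exactly the route the paper intends: the conductor containment is delegated to Proposition~\ref{conductor} in the Appendix, the inclusion $\tr(H)\subseteq H$ comes from $\tr(\omega_{K[H]})=\omega_{K[H]}\cdot\omega_{K[H]}^{-1}\subseteq K[H]$, and the refinement to $M$ follows from the equivalence $\tr(\omega_R)=R \iff R$ Gorenstein $\iff H$ symmetric. Nothing to add.
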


As a corollary  we obtain an upper bound for $\res(H)$.

We define the set of {\em non-gaps} of $H$ to be
$NG(H)=\{ x\in H: x< \Fr(H)\}$ and we denote   $n(H)=|NG(H)|$.

\begin{Corollary}
\label{cor:boundres}
For any numerical semigroup $H$ one has $\res(H)\leq n(H)$, with equality if and only if $\tr(H)=\mathcal{C}_H$.
\end{Corollary}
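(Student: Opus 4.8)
The plan is to read the bound straight off the double inclusion $\mathcal{C}_H \subseteq \tr(H) \subseteq H$ provided by Proposition~\ref{prop:subsets-trace}, once the complement $H \setminus \mathcal{C}_H$ has been identified with the set of non-gaps $NG(H)$.

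First I would pin down $\mathcal{C}_H$ set-theoretically. By definition $\mathcal{C}_H$ is the ideal of $H$ generated by $c(H), c(H)+1, \dots, c(H)+n_1-1$, and since every integer larger than $\Fr(H)$ belongs to $H$ (recall $c(H)=\Fr(H)+1$), this ideal is exactly $\{h \in H : h > \Fr(H)\}$. Because $\Fr(H) \notin H$, taking complements inside $H$ gives $H \setminus \mathcal{C}_H = \{h \in H : h < \Fr(H)\} = NG(H)$. This identification is the only step that requires a small argument; everything else is formal.

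Next, from $\mathcal{C}_H \subseteq \tr(H)$, with both sets contained in $H$, I obtain $H \setminus \tr(H) \subseteq H \setminus \mathcal{C}_H = NG(H)$. Passing to cardinalities and using the formula $\res(H) = |H \setminus \tr(H)|$ from \eqref{eq:res-definition} yields $\res(H) \leq |NG(H)| = n(H)$, as claimed. For the equality statement, since $H \setminus \tr(H) \subseteq H \setminus \mathcal{C}_H$ are finite sets, their cardinalities agree precisely when the sets themselves coincide; and $H \setminus \tr(H) = H \setminus \mathcal{C}_H$ is equivalent to $\tr(H) = \mathcal{C}_H$ (both being subsets of $H$ with $\mathcal{C}_H \subseteq \tr(H)$). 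Thus equality in the bound holds if and only if $\tr(H) = \mathcal{C}_H$. There is no genuine obstacle here: the content of the corollary lies entirely in the inclusion supplied by Proposition~\ref{prop:subsets-trace}, and the present argument amounts to bookkeeping with complements and cardinalities.
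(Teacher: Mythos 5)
Your argument is correct and coincides with the paper's own proof, which rests on the same chain $n(H)=|NG(H)|=|H\setminus\mathcal{C}_H|\geq|H\setminus\tr(H)|=\res(H)$ drawn from Proposition~\ref{prop:subsets-trace}. You merely spell out the identification $H\setminus\mathcal{C}_H=NG(H)$ and the finite-set argument for the equality case in more detail than the paper does, but the route is identical.
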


\begin{proof}
The desired inequality follows from the observation that
$$
n(H)=|NG(H)|=|H\setminus \mathcal{C}_H| \geq |H\setminus \tr(H)|=\res(H).\quad \square
$$

\end{proof}

The map $\rho: NG(H)\to G(H)$ given by $\rho(x)=\Fr(H)-x$ for all $x$ in $NG(H)$  is well defined and injective.
Also, $|NG(H)|+ |G(H)|=\Fr(H)+1$, hence denoting $g(H)=|G(H)|$ we have  $n(H) \leq  g(H)$.

Numerical experiments with GAP (\cite{GAP}) indicate that   another bound for $\res(H)$ might also hold. 
We formulate the following question.

\begin{Question}
\label{que:g-n}
Given a numerical semigroup $H$, is it true that
$$
\res(H)\leq g(H)-n(H)?
$$
\end{Question}

This question has a positive answer for symmetric semigroups: by \cite[Lemma 1(f)]{FGH} $H$ is symmetric if and only if $n(H)=g(H)$.
In Proposition \ref{prop:3-bound} we also confirm Question \ref{que:g-n}, when $H$ is $3$-generated.
 
For any integer $a>3$ the semigroup $H=\langle a, a+1, \dots, 2a-1\rangle$ is nearly Gorenstein and not symmetric (see Proposition \ref{prop:arithmetic-ng}),
 and it has $\res(H)=1=n(H) < g(H)-n(H)=a-2$. 
This shows that the bound in Question \ref{que:g-n} is not always smaller than the one given by Corollary \ref{cor:boundres}.

\medskip

The second list of inclusions in Proposition~\ref{prop:subsets-trace} are sharp, as  confirmed by Proposition~\ref{prop:arithmetic-ng} and Example~\ref{ex:trace-conductor} below.

The following result shows that a numerical semigroup  generated by an arithmetic sequence is  nearly Gorenstein.
We also characterize when such semigroups are almost symmetric, taking into account that the symmetric case
was known from work of Gimenez, Sengupta and Srinivasan in  \cite{GSS}.

\begin{Proposition}
\label{prop:arithmetic-ng}
Let $e>2$, and $H=\langle a, a+d, \dots, a+(e-1)d\rangle$ with $a,d$ coprime nonnegative integers and $e\leq a$.
Then
\begin{enumerate}
\item[{\em (a)}] $H$ is nearly Gorenstein;
\item[{\em (b})] $H$ is symmetric if and only if $a \equiv 2 \mod (e-1)$;
\item[{\em (c)}] $H$ is almost symmetric  if and only if $a=e$ or $a\equiv 2 \mod (e-1)$.
\end{enumerate}
\end{Proposition}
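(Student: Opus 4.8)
The plan is to work with the semigroup $H=\langle a, a+d, \dots, a+(e-1)d\rangle$ by first obtaining an explicit description of its pseudo-Frobenius numbers, since the whole analysis — near-Gorensteinness, the symmetric case, and the almost symmetric case — hinges on understanding $\PF(H)$. Every element of $H$ can be written as $h = qa + r d$ where $0 \le r$ counts a weighted contribution of the generators; more precisely, writing an element of $H$ in terms of the Apéry set $\Ap(H, a)$ with respect to the multiplicity $a$ gives a clean handle on the structure. I would compute $\Ap(H,a)$ explicitly: each nonzero Apéry element is a sum of at most finitely many of the generators $a+id$, and because the generators form an arithmetic progression, the Apéry set stratifies by the number of summands used. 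This is the standard approach (going back to the analysis of arithmetic-sequence semigroups), and it yields a formula expressing $\Ap(H,a)$ and hence $\PF(H)$ in terms of $a$, $d$, $e$, and the residue of $a-1$ modulo $e-1$.

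For part (a), once $\PF(H)$ is known, I would show $M \subseteq \tr(H)$ directly. By Proposition~\ref{prop:subsets-trace} we already have $\mathcal{C}_H \subseteq \tr(H)$, so it suffices to show that every non-gap $x \in NG(H)$ (the finitely many elements of $H$ below $\Fr(H)$) also lies in $\tr(H)$. Because $\tr(H) = \Omega_H + \Omega_H^{-1}$, the task reduces to exhibiting, for each such $x$, a decomposition $x = \omega + \eta$ with $\omega \in \Omega_H$ and $\eta \in \Omega_H^{-1}$; equivalently, one finds a pseudo-Frobenius number $f$ with $x + f$ lying in the anti-canonical ideal. Here the arithmetic-progression structure is essential: the pseudo-Frobenius numbers themselves form a short arithmetic progression (differing by multiples of $d$), and this regularity lets me match each small non-gap against an appropriate $-f \in \Omega_H$ so that the complementary piece is forced into $\Omega_H^{-1}$. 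I expect this matching step to be the main obstacle, since it requires checking that the complementary element genuinely pairs $\omega_{K[H]}$ back into $K[H]$, i.e. verifying the membership $\eta + \Omega_H \subseteq H$ uniformly across all residues of $a$ modulo $e-1$.

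For part (b), the symmetry criterion, I would invoke Kunz's theorem: $H$ is symmetric if and only if $\type(H) = |\PF(H)| = 1$. From the explicit description of $\Ap(H,a)$ the cardinality of $\PF(H)$ is a simple function of $a \bmod (e-1)$ — one computes that the number of maximal Apéry elements collapses to a single pseudo-Frobenius number exactly when $a \equiv 2 \pmod{e-1}$. (This recovers the Gimenez–Sengupta–Srinivasan result cited in the statement, so I can either cite \cite{GSS} or derive the count directly.)

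For part (c), I would apply Nari's characterization \eqref{eq:ag-symmetries}: $H$ is almost symmetric if and only if the pseudo-Frobenius numbers pair up symmetrically around $\Fr(H)$, that is $f_i + f_{\tau - i} = \Fr(H)$. Using the arithmetic-progression description of $\PF(H)$, this symmetry condition becomes a congruence condition that is satisfied precisely in two regimes: the trivially symmetric case $a \equiv 2 \pmod{e-1}$ already covered in (b), and the minimal-multiplicity boundary case $a = e$, where $H$ has maximal embedding dimension and the pseudo-Frobenius numbers take a particularly symmetric form. I would verify the equivalence by writing out the list $f_1 < \dots < f_{\tau-1} < \Fr(H)$ from the Apéry data and checking the pairing identity term by term; the forward direction (almost symmetry forces $a=e$ or $a\equiv 2$) requires showing that outside these two cases the symmetry \eqref{eq:ag-symmetries} fails for at least one index, which is again a congruence computation in $a \bmod (e-1)$. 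The subtlety here, and the secondary obstacle, is keeping track of the edge cases when $\tau$ is small or when $a = e$ makes some generators coincide with Apéry-set boundaries, so I would treat $a=e$ separately before handling the generic $a > e$ situation.
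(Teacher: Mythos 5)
Your overall route coincides with the paper's: get $\PF(H)$ explicitly (the paper cites Tripathi's formula rather than recomputing the Ap\'ery set, but the outcome is the same arithmetic progression $\PF(H)=\{\Fr(H)-(\tau-1)d,\dots,\Fr(H)\}$ with $\tau\equiv a-1 \pmod{e-1}$), then read off (b) from $\tau=1$ and (c) from Nari's pairing condition. Parts (b) and (c) of your plan are sound: because $\PF(H)$ is an arithmetic progression with common difference $d$, every pairing $f_i+f_{\tau-i}=\Fr(H)$ reduces to the single equation $\Fr(H)=\tau d$, which after substituting $\Fr(H)=ak+d(a-1)$ and $\tau=a-1-k(e-1)$ collapses to $k=0$, i.e.\ $a=e$; there is no real case analysis on small $\tau$ to worry about.

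The genuine gap is in part (a). You correctly reduce to producing, for each element $x$ to be covered, a pseudo-Frobenius number $f$ with $x+f\in\Omega_H^{-1}$, and you then declare this matching to be ``the main obstacle'' without carrying it out --- but this matching \emph{is} the content of part (a); everything before it is bookkeeping. Two things are missing. First, you try to cover every non-gap $x\in NG(H)$ individually, which is unnecessary and makes the matching harder than it needs to be: since $\tr(H)$ is an ideal of $H$, it suffices to put the $e$ generators $a, a+d,\dots,a+(e-1)d$ into $\Omega_H+\Omega_H^{-1}$. Second, the explicit witness is never exhibited. The paper's proof supplies it in one line: with $\mathcal{W}=\{-\Fr(H),\dots,-\Fr(H)+(\tau-1)d\}$ generating $\Omega_H$, take
$$
\mathcal{W}'=\{\Fr(H)+a,\ \Fr(H)+a+d,\ \dots,\ \Fr(H)+a+(e-\tau)d\}.
$$
Then $\mathcal{W}+\mathcal{W}'$ consists exactly of the numbers $a+(i+j)d$ with $0\le i\le\tau-1$ and $0\le j\le e-\tau$, i.e.\ precisely the generators of $H$; this simultaneously verifies $\mathcal{W}'\subseteq\Omega_H^{-1}$ (since $\mathcal{W}'+\Omega_H=\mathcal{W}'+\mathcal{W}+H\subseteq H$) and shows that all generators lie in $\Omega_H+\Omega_H^{-1}=\tr(H)$, hence $M\subseteq\tr(H)$. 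Until you specify such a set and check both memberships, part (a) is not proved.
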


\begin{proof}
It is known from \cite[Theorem 4.7]{GSS} that $\tau=\type(H)$ is the unique integer $1\leq \tau \leq e-1$ such that
$a=k(e-1)+\tau+1$ with $k$ integer. Equivalently, $k= \lfloor \frac{a-2}{e-1}  \rfloor$.

Tripathi \cite[Theorem on page 3]{Tripathi} shows  that
$$
\PF(H)=\left\{ a\left\lfloor \frac{x-1}{e-1} \right\rfloor +dx: a-\tau \leq x \leq a-1 \right\}.
$$
For $ a-\tau \leq x \leq a-1$ we get $k(e-1) \leq x-1 \leq k(e-1)+(\tau-1)$, hence  $\lfloor \frac{x-1}{e-1}\rfloor=k$.
This implies that $\Fr(H)=ak+d(a-1)$ and
\begin{equation}
\label{eq:pf-arithmetic}
\PF(H)=\{\Fr(H)-(\tau-1)d, \dots, \Fr(H)-d, \Fr(H)\},
\end{equation}
hence the canonical ideal $\Omega_H  $ is generated by
\begin{equation*}
\mathcal{W}=\{-\Fr(H), -\Fr(H)+d, \dots, -\Fr(H)+(\tau-1)d\}.
\end{equation*}

For part (a) we consider the set
\begin{equation*}
\mathcal{W'}=\{ \Fr(H)+a, \Fr(H)+a+d, \dots, \Fr(H)+a + (e-\tau)d \}\subset H.
\end{equation*}

An element in $\mathcal{W}+\mathcal{W}'$ is of the form $a+(i+j)d$ with $0\leq i\leq \tau-1$ and $1\leq j \leq e-\tau$.
This way we obtain the generators of $H$: $a, a+d, \dots,  a+(e-1)d$,
 which shows that $\mathcal{W}'\subset \Omega_H^{-1}$ and $\Omega_H+\Omega_H^{-1} \supseteq M$. Equivalently, $H$ is nearly Gorenstein.

Part (b) is known and may be traced  back to \cite[Theorem 2.2]{GSS} or (less explicitly in)  \cite{PatilSengupta}.
The statement is  an immediate consequence of the fact that $K[H]$ is Gorenstein if and only if $\tau=1$.

For part (c), using (b), it is enough to treat the case of $H$ being  almost symmetric, but not symmetric.
This is equivalent   (using \eqref{eq:ag-symmetries} and \eqref{eq:pf-arithmetic}) to
\begin{eqnarray*}
(\Fr(H)-(\tau-1)d) + (\Fr(H)-d) &=& \Fr(H), \text{ which is equivalent to} \\
 \Fr(H) &=& \tau d.
\end{eqnarray*}
After we substitute the values of $\Fr(H)$ and $\tau$ in the previous equation, we get
\begin{eqnarray*}
ak+ d(a-1) &=& (a-1-k(e-1)) d, \\
k(a+d(e-1)) &=& 0, \\
k &=& 0.
\end{eqnarray*}

Note that $e\leq a$ and by the way $k$ was defined, we may express $k= \left\lfloor \frac{a-2}{e-1}  \right\rfloor$.  Therefore $k=0$ if and only if $a=e$.
\end{proof}

Next, we present a family of numerical semigroups $H$ such that $ \mathcal{C}_H = \tr(H)$.

\begin{Example}
\label{ex:trace-conductor}
{\em
For the integers  $m>1$ and $q >0$ we let
$$
H=\langle m, qm+1, qm+2, \dots, qm+m-1 \rangle.
$$
This is a semigroup with minimal multiplicity, hence its pseudo-Frobenius numbers are obtained by subtracting $m$ from the rest of the minimal generators.  This gives $\PF(H)=\{(q-1)m+1, (q-1)m+2, \dots, qm-1\}$, a list of $m-1$ consecutive integers. Let $x\in \Omega_H^{-1}$, i.e. $-\PF(H)+x \subset H$. This can  happen
only if $x-\Fr(H)=x-qm+1 \geq qm$, equivalently $x\geq 2qm-1$. Consequently, $\tr(H)= \{x: x \geq qm \}=\mathcal{C}_H$, and $\res(H)=|\{0, m,\dots, (q-1)m\}|=q$.
}
\end{Example}

\section{The case of $3$-generated numerical semigroups}
\label{sec:3gens}

When the numerical semigroup $H$ is $3$-generated, the results in \cite[Section 3]{HHS}   can be applied to obtain a simple formula of $\res(H)$ from the defining ideal of $K[H]$.

Assume $H$ is minimally generated by $n_1, n_2, n_3$, not necessarily listed increasingly.  Let $\varphi: S=K[x_1, x_2, x_3] \to K[H]$ the algebra map given by
$\varphi(x_i)=t^{n_i}$ for $i=1, \dots, 3$. Then $\ker (\varphi)= I_H$, the defining ideal of $K[H]$.

It is proven in \cite{He-semi} that
$H$ is symmetric, equivalently  $K[H]$ is a complete intersection, if and only if,
 up to a permutation, $d=\gcd(n_1, n_2) >1$ and $n_3 \in \langle n_1/d, n_2/d \rangle$.

Assume $H$ is not symmetric. We recall from \cite{He-semi} how to compute the ideal $I_H$ in this case.
We find the positive integers $c_1, c_2, c_3$ minimal with the property that there exist nonnegative integers $a_i, b_i$, $i=1,\dots, 3$ such that
\begin{eqnarray}
\label{eq:3semi-equations}
\nonumber c_1 n_1 &=& b_2 n_2+a_3 n_3, \\
c_2 n_2 &=& a_1 n_1 + b_3 n_3, \\
\nonumber c_3 n_3 &=& b_1n_1 + a_2 n_2.
\end{eqnarray}

Such $a_i, b_i$ are positive, unique, and $c_i=a_i+b_i$ for $i=1, \dots, 3$.
In this notation, the ideal $I_H$ is the ideal of maximal minors of the matrix
\begin{eqnarray}
\label{structure}
A=\left( \begin{array}{ccc} x_1^{a_1} & x_2^{a_2} & x_3^{a_3}\\
x_2^{b_2} & x_3^{b_3} & x_1^{b_1}
 \end{array}\right),
\end{eqnarray}
that we call the {\em structure matrix} of the semigroup $H$.

It is noticed in \cite[page 69]{NNW}
that one can recover $n_1, n_2, n_3$ from  the matrix $A$
by computing  the $K$-vector space dimension for the isomorphic rings
$$
K[H]/(t^{n_1}) \cong S/(x_1, I_H) \cong K[ x_2, x_3]/(x_2^{a_2+b_2}, x_2^{b_2}x_3^{a_3}, x_3^{a_3+b_3}),
$$
and the other two cases, see \cite[Lemma 10.23]{RoSa-book} for a different approach.
Namely, we get
\begin{eqnarray}
\label{eq:gens-from-matrix}
\nonumber n_1 &=& a_2 a_3+b_2 a_3+ b_2 b_3, \\
n_2 &=& a_1a_3+a_1b_3 +b_1b_3, \\
\nonumber n_3 &=& a_1 a_2+b_1 a_2 +b_1b_2.
\end{eqnarray}

It follows from the Hilbert-Burch theorem (\cite[Theorem 1.4.17]{BH}) that the transpose $A^{T}$ is the relation matrix of $I_H$, i.e. the sequence
$$
0\to S^2  \stackrel{A^{T}}{\longrightarrow} S^3 \to I_H \to 0
$$
 is exact. The type of $R=K[H]$ is $2$, hence by \cite[Corollary 3.4]{HHS}  we get 
$$\tr(\omega_R)=I_1(\bar{A^T})=(t^{n_ia_i}, t^{n_ib_i}:i=1,\dots, 3),$$
where $\bar{A^{T}}$ is obtained by  applying $\varphi$ on the entries of $A^{T}$. We may formulate the following result.

\begin{Proposition}
\label{prop:3-semi-trace-formula}
Assume $H$ is a non-symmetric $3$-generated numerical semigroup and let $R=K[H]$.
With notation as in \eqref{eq:3semi-equations}, we set $d_i=\min\{a_i,b_i\}$ for $1\leq i \leq 3$.
Then
$$
\tr(\omega_R)= (t^{d_1 n_1},t^{d_2 n_2}, t^{d_3 n_3})R,  \text{  and } \res(H)=d_1d_2d_3.
$$
\end{Proposition}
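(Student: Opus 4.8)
The formula for the trace ideal is essentially already recorded in the text: by \cite[Corollary 3.4]{HHS} applied to the resolution given by $A^T$, one has $\tr(\omega_R)=(t^{n_ia_i},t^{n_ib_i}:i=1,2,3)R$. The plan is first to simplify this generating set. For each fixed $i$ the two generators $t^{n_ia_i}$ and $t^{n_ib_i}$ are both powers of $t^{n_i}$, and the smaller exponent is $n_i\min\{a_i,b_i\}=n_id_i$. Since $n_i(a_i-d_i)$ and $n_i(b_i-d_i)$ are nonnegative multiples of $n_i$ and hence lie in $H$, both $t^{n_ia_i}$ and $t^{n_ib_i}$ are multiples of $t^{n_id_i}$ in $R$, while $t^{n_id_i}$ is itself one of the two. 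Thus $(t^{n_ia_i},t^{n_ib_i})R=(t^{n_id_i})R$ for each $i$, which yields $\tr(\omega_R)=(t^{d_1n_1},t^{d_2n_2},t^{d_3n_3})R$.

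For the residue I would transfer the computation to $S=K[x_1,x_2,x_3]$. Writing $L=(x_1^{d_1},x_2^{d_2},x_3^{d_3})$, the ideal $\tr(\omega_R)$ is exactly the image of $L$ under $\varphi$, so that $R/\tr(\omega_R)\cong S/(I_H+L)$ and therefore $\res(H)=\dim_K S/(I_H+L)$. The key step is to prove the inclusion $I_H\subseteq L$. By \eqref{structure} the ideal $I_H$ is generated by the three binomial minors
$$x_1^{c_1}-x_2^{b_2}x_3^{a_3},\quad x_2^{c_2}-x_1^{a_1}x_3^{b_3},\quad x_3^{c_3}-x_1^{b_1}x_2^{a_2},$$
and I would check that each of the six monomials occurring here is divisible by some $x_i^{d_i}$: the pure powers $x_i^{c_i}$ lie in $L$ because $c_i=a_i+b_i\geq d_i$, while in each mixed monomial one of the two variables appears with exponent $a_i$ or $b_i$, which is at least $d_i=\min\{a_i,b_i\}$. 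Concretely, $x_2^{b_2}x_3^{a_3}$ and $x_1^{b_1}x_2^{a_2}$ are divisible by $x_2^{d_2}$ and $x_1^{d_1}$ respectively, and $x_1^{a_1}x_3^{b_3}$ by $x_1^{d_1}$.

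Once $I_H\subseteq L$ is established, $I_H+L=L$, so $R/\tr(\omega_R)\cong S/L$. As $L$ is generated by powers of the three distinct variables, $S/L$ has the monomial $K$-basis $\{x_1^{\alpha_1}x_2^{\alpha_2}x_3^{\alpha_3}:0\leq\alpha_i<d_i\}$, whence $\res(H)=\dim_K S/L=d_1d_2d_3$. I expect the only real content to be the inclusion $I_H\subseteq L$; everything else is formal bookkeeping. The mild subtlety there is that it is the mixed terms of the binomials, not merely the pure powers, that must be shown to lie in $L$, and this is precisely the point where the definition $d_i=\min\{a_i,b_i\}$ enters.
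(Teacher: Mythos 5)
Your proposal is correct and follows essentially the same route as the paper: the paper likewise reads off $\tr(\omega_R)=I_1(\bar{A^T})$ from \cite[Corollary 3.4]{HHS} and computes $\res(H)$ via $R/\tr(\omega_R)\iso S/(I_H,x_1^{d_1},x_2^{d_2},x_3^{d_3})\iso S/(x_1^{d_1},x_2^{d_2},x_3^{d_3})$. You merely make explicit the two steps the paper leaves implicit, namely that $(t^{n_ia_i},t^{n_ib_i})R=(t^{n_id_i})R$ and that $I_H\subseteq(x_1^{d_1},x_2^{d_2},x_3^{d_3})$, both of which you verify correctly.
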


\begin{proof} The first part is clear from the discussion above. Since
$$
R/\tr(\omega_R)  \iso S/(I_H, x_1^{d_1}, x_2^{d_2}, x_3^{d_3}) \iso S/(x_1^{d_1}, x_2^{d_2}, x_3^{d_3})
$$
we obtain that $\res(H)=\dim_K R/\tr(\omega_R)= d_1d_2d_3$.
\end{proof}

We may now give a positive answer to Question \ref{que:g-n}, in embedding dimension $3$.
\begin{Proposition}
\label{prop:3-bound}
For any $3$-generated numerical semigroup $H$ one has $$\res(H)\leq g(H)-n(H).$$
\end{Proposition}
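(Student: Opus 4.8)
The plan is to separate the symmetric and non-symmetric cases. If $H$ is symmetric then $\res(H)=0$ while $n(H)=g(H)$ by \cite[Lemma 1(f)]{FGH}, so there is nothing to prove. Assume henceforth that $H$ is not symmetric, so that $\type(H)=2$ and the structure matrix \eqref{structure} is at our disposal. By Proposition~\ref{prop:3-semi-trace-formula} we have $\res(H)=d_1d_2d_3$, and the isomorphism in its proof identifies
$$H\setminus\tr(H)=\{\,i_1n_1+i_2n_2+i_3n_3 : 0\le i_j<d_j,\ 1\le j\le 3\,\},$$
a set of $d_1d_2d_3$ pairwise distinct elements, which I will call the \emph{box}.

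Next I would rewrite $g(H)-n(H)$. The injection $\rho\colon NG(H)\to G(H)$, $\rho(x)=\Fr(H)-x$, shows that $g(H)-n(H)$ equals the number of gaps $f$ for which $\Fr(H)-f$ is again a gap; equivalently $g(H)-n(H)=|K_H\setminus H|$ for the canonical ideal $K_H=\{z\in\ZZ:\Fr(H)-z\notin H\}\supseteq H$. As $\type(H)=2$, the relative ideal $K_H$ has exactly two generators over $H$, one being $0$; calling the other $s$, one gets $K_H=H\cup(s+H)$ and therefore
$$g(H)-n(H)=|(s+H)\setminus H|=\bigl|\{\,h\in H: h+s\notin H\,\}\bigr|.$$
Here $s$ is the distance between the two pseudo-Frobenius numbers, which by the Hilbert--Burch resolution equal $E_i-(n_1+n_2+n_3)$, where $E_1,E_2$ are the degrees of the two first syzygies of $I_H$ (the columns of $A^{T}$); hence $s=|E_1-E_2|$, and an elementary computation turns this into the three equal cyclic forms
$$s=|b_2n_2-a_1n_1|=|b_3n_3-a_2n_2|=|b_1n_1-a_3n_3|.$$

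The inequality then reduces to the inclusion $\text{box}\subseteq\{h\in H:h+s\notin H\}$, which is the heart of the argument and which I would prove with Apéry sets. Recall the isomorphism $K[H]/(t^{n_1})\cong K[x_2,x_3]/(x_2^{c_2},x_2^{b_2}x_3^{a_3},x_3^{c_3})$ from Section~\ref{sec:3gens}; it gives
$$\Ap(H,n_1)=\{\,\beta_2n_2+\beta_3n_3 : 0\le\beta_2<c_2,\ 0\le\beta_3<c_3,\ \text{not }(\beta_2\ge b_2\text{ and }\beta_3\ge a_3)\,\}.$$
Orient so that $s=b_2n_2-a_1n_1$ (the opposite sign is symmetric, handled by the analogous staircase for $\Ap(H,n_2)$). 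For a box element $h=\sum_j i_jn_j$ write $h+s=(i_1-a_1)n_1+(i_2+b_2)n_2+i_3n_3$. Then $(i_2+b_2,i_3)$ lies in the displayed staircase, since $i_2<d_2\le a_2$ gives $i_2+b_2<c_2$, while $i_3<d_3\le a_3$ both keeps $\beta_3<c_3$ and avoids the forbidden corner. Hence $w:=(i_2+b_2)n_2+i_3n_3$ is the Apéry representative of $h+s$ modulo $n_1$; since $i_1<d_1\le a_1$ makes $i_1-a_1$ negative, $h+s=w-(a_1-i_1)n_1<w$ lies strictly below the least element of $H$ in its residue class, so $h+s\notin H$.

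Comparing cardinalities then yields $\res(H)=d_1d_2d_3\le g(H)-n(H)$. The main obstacle is exactly the inclusion of the previous paragraph: exhibiting, uniformly over the box, a reason why $h+s$ leaves $H$. The Apéry staircase supplies this cleanly, but the bookkeeping is delicate in two respects — one must match the correct cyclic form of $s$ to the generator modulo which one reduces, and one must track that the two sign cases consume, respectively, the bounds $i_j<a_j$ and $i_j<b_j$, both of which are guaranteed precisely by $i_j<d_j=\min\{a_j,b_j\}$.
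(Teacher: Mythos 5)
Your proof is correct, but it follows a genuinely different route from the paper's. The paper settles the non-symmetric case in two lines by invoking the genus formula of Nari--Numata--Watanabe \cite[Theorem 3.2]{NNW}, namely $2g(H)-(\Fr(H)+1)\in\{a_1a_2a_3,\,b_1b_2b_3\}$; since $\res(H)=d_1d_2d_3\le\min\{a_1a_2a_3,b_1b_2b_3\}$ and $n(H)+g(H)=\Fr(H)+1$, the bound falls out immediately. You instead build an explicit combinatorial injection: you identify $H\setminus\tr(H)$ with the graded box $\{i_1n_1+i_2n_2+i_3n_3:0\le i_j<d_j\}$, rewrite $g(H)-n(H)$ as $|(s+H)\setminus H|$ for $s$ the difference of the two pseudo-Frobenius numbers, and check via the Ap\'ery staircases that translating the box by $s$ lands outside $H$. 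I verified the delicate points: the three cyclic expressions for $s$ are correct (they all equal $|E_1-E_2|$ by degree-compatibility of the columns of $A^{T}$), and in each sign case the staircase membership plus the negativity of one coefficient consume exactly the bounds $i_j<a_j$ (respectively $i_j<b_j$), all guaranteed by $i_j<d_j$. What your argument buys is self-containedness and an explicit witness set of cardinality $\res(H)$ inside the $(g(H)-n(H))$-element set $\{h\in H:h+s\notin H\}$ --- in effect you reprove the inequality you need from \cite{NNW} rather than citing it; what the paper's route buys is brevity, together with the sharper information that $g(H)-n(H)$ is exactly one of the two products $a_1a_2a_3$, $b_1b_2b_3$, not merely an upper bound for $d_1d_2d_3$.
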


\begin{proof}
If $H$ is symmetric we actually have equality $0=\res(H)= g(H)-n(H)$, as noted in \cite[Lemma 1(f)]{FGH}.
Assume $H$ is not symmetric and that it has a structure matrix $A$ denoted as in \eqref{structure}.
 Nari et al. prove in \cite[Theorem 3.2]{NNW} that
$$
2g(H)-(\Fr(H)+1) \in \{a_1 a_2 a_3, b_1 b_2 b_3\}.
$$
Using Proposition \ref{prop:3-semi-trace-formula}, we obtain
$$
\res(H) \leq \min\{a_1 a_2 a_3, b_1 b_2 b_3\} \leq 2g(H)-(\Fr(H)+1)=  g(H)-n(H). \quad \square
$$

\end{proof}

As an application of Proposition~\ref{prop:3-semi-trace-formula} we will  characterize the $3$-generated numerical semigroups such that their trace is at either end of the interval $[\mathcal{C}_H , M]$.

\begin{Theorem}
\label{thm:3semi-trace-is-maximal}
Let $H$ be a  $3$-generated numerical semigroup. Then $\tr(H)=M$ if and only if one of the following cases occurs:
\begin{enumerate}
\item[{\em (i)}] $H=\langle ab+b+1, b+c+1, ac+a+c\rangle$ where $a,b,c$ are positive integers with  $\gcd(b+c-1, ab-c)=1$, or
\item[{\em (ii)}] $H=\langle bc+b+1, ca+c+1, ab+a+1\rangle$, where $a,b,c$ are positive integers with $\gcd(bc+b+1, ca+c+1)=1$.
\end{enumerate}
In case {\em (i)}, $\Fr(H)= abc+bc-b-1+\max\{0, ab-c\}$, and in case {\em (ii)}, $\Fr(H)=2abc-2$.
\end{Theorem}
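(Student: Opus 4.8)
The plan is to extract everything from the structure matrix \eqref{structure}. By Proposition~\ref{prop:3-semi-trace-formula} we have $\res(H)=d_1d_2d_3$ with $d_i=\min\{a_i,b_i\}$, so for a non-symmetric $H$ the equality $\tr(H)=M$ is equivalent to $\res(H)=1$, that is
$$\min\{a_i,b_i\}=1\qquad\text{for }i=1,2,3.$$
Thus for each $i$ at least one of $a_i,b_i$ equals $1$, and I record the eight resulting sign patterns. Before substituting I would cut these down using the symmetries of \eqref{structure}: the cyclic relabelling $n_1\to n_2\to n_3\to n_1$ permutes the pairs $(a_i,b_i)$ cyclically, while transposing $n_2,n_3$ sends $(a_1,b_1,a_2,b_2,a_3,b_3)$ to $(b_1,a_1,b_3,a_3,b_2,a_2)$. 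These generate an $S_3$-action under which the eight patterns fall into exactly two orbits: the two \emph{pure} patterns ($a_1=a_2=a_3=1$, or $b_1=b_2=b_3=1$) and the six \emph{mixed} patterns. This dichotomy is precisely what produces cases (ii) and (i).

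I would then substitute an orbit representative into \eqref{eq:gens-from-matrix}. For the pure pattern $a_1=a_2=a_3=1$, writing $(b_1,b_2,b_3)=(a,b,c)$ gives $n_1=bc+b+1$, $n_2=ca+c+1$, $n_3=ab+a+1$, which is case (ii); the other pure pattern yields the same family after interchanging two of $a,b,c$. For the mixed representative $a_1=a_2=b_3=1$, naming the three free exponents and matching the resulting generators reproduces case (i). In each case one of the relations \eqref{eq:3semi-equations} has a generator occurring with coefficient $1$ on its right-hand side (namely $n_3$ via $a_3=1$ in the pure case, and $n_1$ via $a_1=1$ in the mixed case), so that generator is an integer combination of the other two and $\gcd(n_1,n_2,n_3)$ collapses to the gcd of two generators. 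For the converse I would invoke the classification in \cite{He-semi,NNW}: given positive integers $a_i,b_i$, the matrix \eqref{structure} is the structure matrix of the non-symmetric, minimally $3$-generated semigroup $\langle n_1,n_2,n_3\rangle$ exactly when $\gcd(n_1,n_2,n_3)=1$; matching this coprimality to the stated two-variable gcd closes the equivalence.

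For the Frobenius numbers I would read off the Hilbert--Burch resolution
$$0\to S^2\stackrel{A^{T}}{\longrightarrow}S^3\to S\to K[H]\to 0.$$
The canonical module is $\Ext^2_S(K[H],\omega_S)$ with $\omega_S=S(-\sigma)$, $\sigma=n_1+n_2+n_3$, so its two generators $t^{-f_1},t^{-f_2}$ have $f_j=d_j-\sigma$, where $d_1,d_2$ are the degrees of the two generators of the syzygy module $S^2$ (computed from the columns of $A^{T}$), and $\Fr(H)=\max\{f_1,f_2\}$. In case (ii) a short computation gives $\{f_1,f_2\}=\{abc-1,\,2abc-2\}$, hence $\Fr(H)=2abc-2$. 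In case (i) one finds $f_1=abc+bc-b-1$ and $f_2-f_1=ab-c$, so $\Fr(H)=f_1+\max\{0,ab-c\}$ is exactly the stated value, the $\max$ recording which pseudo-Frobenius number dominates.

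The step I expect to be the main obstacle is the converse (the ``if'' direction): one must check that the parametrized triples are not merely coprime but genuinely minimally $3$-generated and non-symmetric, with structure matrix realizing the prescribed exponent pattern (so that $\min\{a_i,b_i\}=1$ really holds), and then simplify $\gcd(n_1,n_2,n_3)=1$ to the precise two-variable form in the statement. Organizing the $S_3$-symmetry so that the six mixed patterns coalesce into the single family (i) without double-counting parameters is the other point needing care; by contrast, the generator and Frobenius computations are routine once a representative is fixed.
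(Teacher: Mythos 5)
Your proposal follows the same backbone as the paper's proof: reduce to the structure matrix \eqref{structure}, observe that $\tr(H)=M$ forces $\min\{a_i,b_i\}=1$ for each $i$ (you obtain this from $\res(H)=d_1d_2d_3=1$ via Proposition~\ref{prop:3-semi-trace-formula}, while the paper uses $I_1(A)=(x_1,x_2,x_3)$ via \cite[Corollary 3.5]{HHS} --- the same content), sort the eight exponent patterns into two orbits under relabelling, and read off the generators from \eqref{eq:gens-from-matrix}. Your orbit analysis, including the transposition action $(a_1,b_1,a_2,b_2,a_3,b_3)\mapsto(b_1,a_1,b_3,a_3,b_2,a_2)$, is correct and matches the paper's ``two overlapping cases''. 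You diverge from the paper in two sub-steps, both validly. For the converse you plan to cite a general correspondence (a positive-exponent matrix with $\gcd(n_1,n_2,n_3)=1$ is the structure matrix of the non-symmetric, minimally $3$-generated semigroup it defines); the paper does not invoke such a blanket statement: in case (i) it checks by hand that the displayed relations \eqref{eq:case1} are the minimal ones, using coprimality of $n_2,n_3$ to force $c_1+a_3\geq n_2=b+c+1$ and the minimality of the $c_i$ to get $c_1+a_3\leq b+c+1$, and in case (ii) it cites \cite[Theorem 14]{RoSa-3pseudo}. You rightly flag this as the main obstacle; if the precise converse is not actually located in \cite{He-semi} or \cite{NNW}, that squeeze argument is what you would need to supply. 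For the Frobenius numbers you compute the degrees of the generators of $\omega_{K[H]}$ from the graded Hilbert--Burch resolution (your sign $f_j=d_j-\sigma$ is correct and does give $\{abc-1,\,2abc-2\}$ in case (ii) and $\Fr(H)=abc+bc-b-1+\max\{0,ab-c\}$ in case (i)), whereas the paper quotes a closed formula from \cite{RamirezAlfonsin}; both routes are routine and equivalent. The only housekeeping point is that your mixed representative $a_1=a_2=b_3=1$ yields family (i) with the generators in a different order than in the statement, so the final matching requires the relabelling you already anticipate.
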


\begin{proof}
 Assume $H=\langle n_1, n_2, n_3 \rangle$ such that $\tr(H)=M$. By \cite[Corollary 3.5]{HHS}, that is equivalent to  $I_1(A)=(x_1, x_2, x_3)$,
where $A$ is the matrix attached to $H$ as in \eqref{structure}.
Clearly, $H$ is not symmetric, hence up to a permutation of the variables, there are essentially two (overlapping) cases to consider.

Case 1:
\begin{eqnarray*}
A=\left( \begin{array}{ccc} x_1^{ } & x_2^{a } & x_3^{b}\\
x_2^{ } & x_3^{ } & x_1^{c}
 \end{array}\right), \text{ with } a,b,c >0.
\end{eqnarray*}

Using \eqref{eq:gens-from-matrix} we get $n_1=ab+b+1, n_2= b+c+1, n_3=ac+a+c$, as desired.
It is easy to check that $\gcd(n_1, n_2)=\gcd(n_2, n_3)=\gcd(n_1, n_3)$, hence
$1=\gcd(n_1, n_2, n_3)= \gcd(n_2, n_1-n_2)= \gcd(b+c-1, ab-c)$.

Conversely, let $n_1=ab+b+1, n_2= b+c+1, n_3=ac+a+c$ for some positive integers $a,b,c$ such that $\gcd(b+c-1, ab-c)=1$.
Arguing as above we see that the generators of $H$ are pairwise coprime, hence  $H$ is a numerical semigroup which is not symmetric.
It is easy to verify the following equations:
\begin{eqnarray}
\label{eq:case1}
\nonumber (1+c) n_1 &=& n_2+ b n_3, \\
(1+a) n_2 &=& n_1 + n_3, \\
\nonumber (1+b) n_3 &=& c n_1 + a n_2.
\end{eqnarray}

We claim that these are the minimal relations \eqref{eq:3semi-equations} among $n_1, n_2, n_3$.

Since $a_1, b_3$ in \eqref{eq:3semi-equations} are positive, unique and $(1+a)n_2=n_1+n_3$, we may identify $c_2=1+a$ and $a_1=b_3=1$.

After substituting $n_1=(1+a)n_2-n_3$ into $c_1n_1=b_2 n_2+a_3n_3$, we get $c_1((1+a)n_2-n_3)=b_2n_2+a_3n_3$, hence
$$
(c_1(1+a)-b_2)n_2=(c_1+a_3) n_3.
$$
Since $n_2$ and $n_3$ are coprime, there exists a positive integer $\ell$ so that $c_1+a_3=\ell n_2$. Thus,  $c_1+a_3\geq b+c+1$.

On the other hand, comparing \eqref{eq:case1} and \eqref{eq:3semi-equations} we obtain that $c_1\leq 1+c $ and $a_3=c_3-b_3 \leq (b+1)-1=b$, hence
$c_1+a_3 \leq 1+c+b$. This implies that $c_1+a_3=b+c+1$, and moreover $c_1=c+1$ and $a_3=b$.
We can  now identify the rest of the coefficients in \eqref{eq:3semi-equations}:
$c_1=1+b, b_1=c, a_2=a$, which shows  that the matrix $A$ has the desired entries.

Case 2:
\begin{eqnarray*}
A=\left( \begin{array}{ccc} x_1^{ } & x_2^{ } & x_3^{ }\\
x_2^{b} & x_3^{c} & x_1^{a}
 \end{array}\right), \text{ with } a,b,c >0.
\end{eqnarray*}

Using \eqref{eq:gens-from-matrix} we get $n_1=bc+b+1, n_2=ca+c+1, n_3=ab+a+1$.
It is easy to see that   $\gcd(n_1, n_2)=\gcd(n_2, n_3)=\gcd(n_1, n_3)$, hence the desired  description for $H$.

Conversely, let  $a,b,c$  be  positive integers with $\gcd(bc+b+1, ca+c+1)=1$.
It now follows from \cite[Theorem 14]{RoSa-3pseudo} (and its proof) that    $H=\langle bc+b+1, ca+c+1,  ab+a+1   \rangle$ is a pseudo-symmetric numerical semigroup
whose matrix $A$  is the one we started with this case.

It is shown in \cite[Theorem 2.2.3]{RamirezAlfonsin} and \cite[Exercise 5, pp. 145]{Kunz-book} that
for any non-symmetric numerical semigroup $H=\langle n_1,n_2, n_3 \rangle$ one has
$$
\Fr(H)= \max\{ c_1n_1+b_3n_3, c_2n_2+ a_3n_3\},
$$
where $c_1, c_2, a_3, b_3$ are as in \eqref{eq:3semi-equations}.
It is now an easy exercise to derive  the announced formulas for $\Fr(H)$, when $H$ belongs to either one of the two families considered above.
\end{proof}

\begin{Remark}
{\em
As noticed by  Nari, Numata and Keiichi~Watanabe in \cite[Corollary 3.3]{NNW} (see also \cite[Corollary 2.9]{Numata}), the format of the matrix $A$ in case (ii) of Theorem \ref{thm:3semi-trace-is-maximal}
corresponds to $H$ being pseudo-symmetric, which is equivalent in  embedding dimension $3$ to $H$ being almost symmetric and not symmetric, see \cite[Proposition 2.3]{Numata}.
The complete  parametrization of $3$-generated pseudo-symmetric numerical semigroups was obtained by Rosales and Garc\'ia-S\'anchez in \cite{RoSa-3pseudo}.
}
\end{Remark}

\begin{Proposition}
\label{prop:3semi-trace-is-conductor}
Assume $H$ is a non-symmetric $3$-generated numerical semigroup. Then $\tr(H)=\mathcal{C}_H$ if and only if $H=\langle 3, 3a+1, 3a+2\rangle$ for some positive integer $a$.
\end{Proposition}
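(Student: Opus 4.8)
The plan is to prove both implications, the forward direction being the substantial one. For the reverse implication, the semigroups $H=\langle 3,3a+1,3a+2\rangle$ are exactly the members $m=3$, $q=a$ of the family in Example~\ref{ex:trace-conductor}, where it is already checked that $\tr(H)=\mathcal{C}_H$ (indeed $\res(H)=a$); so this direction requires no new work.

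For the forward direction, suppose $\tr(H)=\mathcal{C}_H$. First I would pin down the multiplicity, and this is the step I expect to be the main obstacle to phrase cleanly. The key observation is a count of minimal generators: as recorded in Section~\ref{sec:prelim}, $\mathcal{C}_H$ is minimally generated as an ideal of $H$ by the $n_1$ consecutive elements $c(H),\dots,c(H)+n_1-1$, so $\mu_H(\mathcal{C}_H)=n_1$. On the other hand, by Proposition~\ref{prop:3-semi-trace-formula} the ideal $\tr(H)$ is generated by the three elements $d_1n_1,d_2n_2,d_3n_3$, hence $\mu_H(\tr(H))\le 3$. Since the two ideals coincide, $n_1\le 3$; as $H$ is $3$-generated, its multiplicity satisfies $n_1\ge 3$, and we conclude $n_1=3$, i.e. $H$ has minimal multiplicity.

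From here the computation is explicit. With $n_1=3$, formula~\eqref{eq:gens-from-matrix} reads $3=a_2a_3+b_2a_3+b_2b_3$ with all exponents positive, which forces $a_2=a_3=b_2=b_3=1$. The same formula then gives $n_2=2a_1+b_1$ and $n_3=a_1+2b_1$, while Proposition~\ref{prop:3-semi-trace-formula} yields $\res(H)=d_1d_2d_3=\min\{a_1,b_1\}$. Since $n_2\ne n_3$ we have $a_1\ne b_1$; set $a=\min\{a_1,b_1\}$ and $b=\max\{a_1,b_1\}$. Because $H$ has multiplicity $3$, its minimal generators $n_2,n_3$ are exactly the nonzero elements of $\Ap(H,3)$ (they are minimal generators, hence the smallest elements of their residue classes mod $3$, and these classes are distinct and nonzero). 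Counting gaps class by class then gives $g(H)=\big((n_2+n_3)-3\big)/3=a+b-1$ and $\Fr(H)=\max\{n_2,n_3\}-3=a+2b-3$, whence $n(H)=\Fr(H)+1-g(H)=b-1$.

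Finally I would combine these. By Corollary~\ref{cor:boundres} the hypothesis $\tr(H)=\mathcal{C}_H$ is equivalent to $\res(H)=n(H)$, that is $a=b-1$, i.e. $b=a+1$. Substituting $\{a_1,b_1\}=\{a,a+1\}$ into $n_2=2a_1+b_1$ and $n_3=a_1+2b_1$ gives $\{n_2,n_3\}=\{3a+1,3a+2\}$, so $H=\langle 3,3a+1,3a+2\rangle$ with $a=\min\{a_1,b_1\}\ge 1$ a positive integer, as claimed. The one routine point to verify carefully is the gap count underlying $g(H)$ and $\Fr(H)$, which follows from the maximal embedding dimension description of the Apéry set; everything else is forced by the two generator formulas and Proposition~\ref{prop:3-semi-trace-formula}.
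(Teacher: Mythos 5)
Your proof is correct and follows essentially the same route as the paper's: pin down the multiplicity as $3$ by comparing the number of generators of $\tr(H)$ (at most $3$ by Proposition~\ref{prop:3-semi-trace-formula}) with that of $\mathcal{C}_H$ (equal to the multiplicity), then compute $\res(H)=\min\{a_1,b_1\}$ via that proposition and compare it with $|H\setminus\mathcal{C}_H|$. The only organizational difference is that you parametrize by the structure matrix and extract $g(H)$, $\Fr(H)$ from the Ap\'ery set of $3$, while the paper parametrizes the generators mod $3$ in two cases and reads off the structure matrix directly; both arguments force $b=a+1$ and yield the same conclusion.
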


\begin{proof}
Assume $\tr(H)=\mathcal{C}_H$.

It follows from Proposition \ref{prop:3-semi-trace-formula} that 
 $\mu(\tr(H))\leq 3$. If $\mu(\tr(H))=2$, then $e(H)=\mu(\mathcal{C}_H)=2$ and $H$ is symmetric, a contradiction. Therefore, 
$\mu(\tr(H))=3$,  hence  $H$ has multiplicity $3$.
(We can get the same thing by applying   Corollary \ref{maxtr}.)
Listing  its generators increasingly we have that either $H=\langle 3, 3a+1, 3b+2 \rangle$ with $0<a \leq b$, or $H=\langle 3, 3b+2, 3a+1\rangle$ with $a>b>0$.

Assume $a \leq b$. Then $3b+2\notin \langle 3, 3a+1\rangle$, hence $3b+2 \leq \Fr(\langle 3, 3a+1\rangle)= 6a-1$, by \cite{RamirezAlfonsin}. Thus $b<2a$.
It is easy to check that the structure matrix   \eqref{structure} is
$$
A=\left( \begin{array}{ccc} x_1^{2a-b} & x_2 & x_3\\
x_2 & x_3 & x_1^{2b-a+1}
 \end{array}\right).
$$
Hence  $\res(H)=2a-b$  by   Proposition \ref{prop:3-semi-trace-formula}.
Note that $0, 3, 6, \dots, 3(a-1)$ are not in $\mathcal{C}_H$, hence
$2a-b=\res(H)=|H\setminus{C}_H| \geq  a$. This gives $a=b$ and $H= \langle 3, 3a+1, 3a+2 \rangle$.

If $a > b$ then  arguing as in the previous case we obtain $3a+1 \leq \Fr(\langle 3, 3b+2 \rangle)= 6b+1$, and $a<2b$.
Clearly $0, 3, 6, \dots, 3b$ are  not in $\mathcal{C}_H$, hence $\res(H)=|H\setminus \mathcal{C}_H| \geq b+1$.
On the other hand, the structure matrix of $H$ is
$$
A=\left( \begin{array}{ccc} x_1^{2b-a+1} & x_2 & x_3\\
x_2  & x_3 & x_1^{2a-b}
 \end{array}\right),
$$
 and Proposition \ref{prop:3-semi-trace-formula} gives $\res(H)=2b-a+1$.
Thus $2b-a+1 \geq b+1$, and $b\geq a$, a contradiction.

Example \ref{ex:trace-conductor}  confirms that for any $a>0$ the semigroup $H=\langle 3,3a+1,3a+2 \rangle$ satisfies $\tr(H)=\mathcal{C}_H$.
\end{proof}

\begin{Remark} \label{rem:3large}
{\em
From the proof of Proposition \ref{prop:3semi-trace-is-conductor} we see that for any $a>0$ we have that $\res(\langle 3, 3a+1, 3a+2\rangle)= a$.
}
\end{Remark}

\section{The residue in shifted families of semigroups}
\label{sec:shifted-residue}

Remark~\ref{rem:3large} indicates that  the residue of a $3$-generated numerical semigroup $H$ may be as large as possible.
However, this is not the case in a shifted family of semigroups, as we verify below.

Firstly, we extend the definition of residue from \eqref{eq:res-definition} to arbitrary affine subsemigroups of $\NN$.
In this sense, for any semigroup $H\subset \NN$ containing $0$ we let
\begin{equation*}
\res(H) = \res \left(\frac{1}{d}H\right), \text{ where } d=\gcd(h:h\in H).
\end{equation*}

Given the sequence of integers $\ab: a_1<\dots <a_e$, for any $j$ we denote $\ab+j:a_1+j, \dots, a_e+j$.
The shifted family of $\ab$ is the family $\{ \ab+j \}_{j\geq 0}$.
It has been proved that for large enough shifts several properties occur periodically in the shifted
family of semigroups $\{ \langle \ab+j \rangle \}_{j\geq 0}$
and their semigroup rings  $\{ K[\langle \ab+j \rangle ]\}_{j\geq 0}$, see \cite{JS}, \cite{Vu}, \cite{HeS}, \cite{S-3semi}.
For instance, Jayanthan and Srinivasan  \cite{JS}  showed that  $\text{for }j\gg 0 $
$$
  K[\langle \ab+j \rangle] \text{ is complete intersection (CI)} \iff K[\langle \ab+j +(a_e-a_1) \rangle] \text{ is CI}.
$$
More generally, Vu (\cite[Theorem 1.1]{Vu}) showed that
\begin{equation*}
\text{for }j\gg 0, \quad \beta_i(K[\langle \ab+j \rangle])=\beta_i(K[\langle \ab+j+(a_e-a_1) \rangle]) \text{ for all }i.
\end{equation*}
In particular, for $j\gg 0$ the algebras $K[\langle\ab+j\rangle]$ and $K[\langle \ab+j+(a_e-a_1)\rangle]$ are Gorenstein at the same time. This implies that
 the semigroups $\langle\ab+j\rangle$ and $\langle \ab+j+(a_e-a_1)\rangle$ are symmetric at the same time. Equivalently,
\begin{equation*}
\text{ for } j \gg 0, \quad \res(\langle \ab+j \rangle)=0 \iff \res(\langle \ab+j+ (a_e-a_1) \rangle) =0.
\end{equation*}

It is natural to ask the following.

\begin{Question}
\label{que:shifts}
Given the list of integers $\ab: a_1<\dots <a_e$, is it true that
\begin{equation*}
\text{ for } j\gg 0, \quad \res(\langle \ab+j \rangle) = \res(\langle \ab+j+ (a_e-a_1) \rangle) ?
\end{equation*}
\end{Question}

We remark that a positive answer to Question~\ref{que:shifts} implies that for numerical semigroups with bounded width, their residue is also bounded. We recall that the width of a numerical semigroup  $H$  is defined in \cite{HeS} as the difference between the largest and the smallest minimal generator of $H$.

Numerical experiments with GAP (\cite{GAP}, \cite{Num-semigroup}) indicate that Question \ref{que:shifts} might have a positive answer.
Next we confirm it in case $e\leq 3$. If $e=2$, then $\langle a_1+j, a_2+j\rangle$ is symmetric for all $j$, and we are done.
The case $e=3$ is proved in the following theorem.
We first note that when studying  asymptotic properties in  a shifted  family $\{ \ab +j \}_j$, we may assume $a_1=0$.

\begin{Theorem}
\label{thm:3semi-res-periodic}
Given the integers $0<a<b$, let $D=\gcd(a,b)$ and  $k_{a,b}= \max\{b(\frac{b-a}{D} -1), \frac{ba}{D} \}$.
For any  integer $j$ we denote $H_j= \langle j, j+a, j+b\rangle$.

Then $\res(H_j)=\res(H_{j+b})$ for all $j>2k_{a,b}$.
\end{Theorem}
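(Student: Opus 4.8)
The plan is to reduce the claim to the behaviour of the structure matrix \eqref{structure} of $H_j=\langle j,j+a,j+b\rangle$ under the shift $j\mapsto j+b$, using Proposition~\ref{prop:3-semi-trace-formula}, and to control that behaviour through the explicit form of the defining relations \eqref{eq:3semi-equations} established in \cite{S-3semi}. Since $\gcd(j,a,b)=\gcd(j,D)$ and $D\mid b$ force $\gcd(j+b,a,b)=\gcd(j,a,b)$, the case $\gcd(j,a,b)>1$ reduces, via the definition of $\res$ for non-numerical subsemigroups, to a shifted family with parameters $a/d',b/d'$ where $d'=\gcd(j,a,b)$; so I may assume $H_j$ is a numerical semigroup. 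When $H_j$ is not symmetric, write $a_i(j),b_i(j)$ for the exponents of its structure matrix and put $d_i(j)=\min\{a_i(j),b_i(j)\}$; then Proposition~\ref{prop:3-semi-trace-formula} gives $\res(H_j)=d_1(j)d_2(j)d_3(j)$, while $\res(H_j)=0$ when $H_j$ is symmetric. Hence for $j>2k_{a,b}$ it suffices to prove: (1) $H_j$ is symmetric if and only if $H_{j+b}$ is symmetric; and (2) in the non-symmetric range, $d_i(j+b)=d_i(j)$ for $i=1,2,3$.

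The decisive input is that, for $j>2k_{a,b}$, the combinatorics of \eqref{eq:3semi-equations} stabilises into a pattern depending on $j$ only through $j\bmod b$. I would extract from \cite{S-3semi} explicit formulas for $c_i(j),a_i(j),b_i(j)$ exhibiting each exponent as quasi-linear in $j$ of period $b$: the differences $a_i(j+b)-a_i(j)$ and $b_i(j+b)-b_i(j)$ are nonnegative integers independent of $j$, and in every column some entry realising the minimum $d_i(j)$ has difference $0$ (the complementary entry growing linearly). The complete intersection dichotomy needed for (1) is of the same nature: combining \cite{JS} with the fact (from \cite{He-semi}) that a $3$-generated $H$ is symmetric precisely when $K[H]$ is a complete intersection, the symmetry of $H_j$ is for large $j$ a function of $j\bmod b$ alone (the shift being by the width $b$), and \cite{S-3semi} pins the onset of this regime at $2k_{a,b}$.

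Granting this description, both statements follow at once. Since $j$ and $j+b$ represent the same class modulo $b$, the symmetry test returns the same answer for both, giving (1). In the non-symmetric range, the entry attaining the minimum in each column is unchanged under the shift while its partner can only grow, so $d_i(j+b)=d_i(j)$ for every $i$; multiplying over $i=1,2,3$ yields $\res(H_{j+b})=d_1(j)d_2(j)d_3(j)=\res(H_j)$, as required.

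The real work, and the main obstacle, is the second paragraph: deriving from \cite{S-3semi} the exact stable shape of the minimal relations \eqref{eq:3semi-equations} and checking that $2k_{a,b}$ is precisely the threshold beyond which both the complete intersection test and the identification of the column minima have entered their periodic regime. The delicate point to exclude is that both exponents in some column might grow with $j$, which would make $d_i(j)$, and hence $\res(H_j)$, unbounded rather than $b$-periodic; ruling this out rests on the minimality built into \eqref{eq:3semi-equations} together with the explicit estimates of \cite{S-3semi}, which guarantee that in each column one of the two exponents stays bounded, in fact constant on residue classes modulo $b$. Once this stable pattern and the bound $2k_{a,b}$ are secured, the equality $\res(H_j)=\res(H_{j+b})$ is a formal consequence.
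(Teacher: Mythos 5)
Your proposal follows essentially the same route as the paper: reduce everything to the structure matrix via Proposition~\ref{prop:3-semi-trace-formula}, handle symmetry by the periodicity of the complete intersection property, and track how the exponents in \eqref{eq:3semi-equations} change under the shift $j\mapsto j+b$. The facts you defer to \cite{S-3semi} are exactly the ones the paper invokes --- \cite[Theorem 2.2]{S-3semi} gives the middle relation $\frac{b}{D}n_2=\frac{b-a}{D}n_1+\frac{a}{D}n_3$, so $a_1=(b-a)/D$, $b_3=a/D$ and $a_2+b_2=b/D$ are constant, while \cite[Proposition 4.2]{S-3semi} shows that only the last column grows, by $e=\gcd(a,b)/\gcd(j,a,b)$ per shift of $b$ (constant along each class mod $b$, though not literally independent of $j$ as you state) --- whence for $j>2k_{a,b}$ the three column minima are $(b-a)/D$, $\min\{a_2,b_2\}$ and $a/D$, exactly the stable pattern you predicted.
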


Before giving the proof of Theorem \ref{thm:3semi-res-periodic} we recall a result from \cite{S-3semi} (extending  Jayanthan and Srinivasan's \cite[Theorem 1.4]{JS})
 about the occurence of symmetric semigroups in a shifted family $\{ \langle j, a+j, b+j \rangle \}_{j\geq 0}$.

\begin{Lemma} (\cite[Theorem 3.1]{S-3semi})
\label{lemma:3semi-ci}
With notation as in Theorem \ref{thm:3semi-res-periodic}, let
\begin{equation}
\label{eq:T}
T=\prod_{p \text{ prime, } \nu_p(a)<\nu_p(b)} p^{\nu_p(b)},
\end{equation}
where for any integer $n$ we denote $\nu_p(n)=\max\{i: p^i \text{ divides }n\}$.
Then for $j>k_{a,b}$  the semigroup $H_j$ is symmetric if and only if $j$ is a multiple of $T$.
In particular, in the family of semigroups $\{ H_j \}_{j> k_{a,b}}$ the symmetric property occurs periodically with principal period $T$.
\end{Lemma}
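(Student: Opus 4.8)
Although the statement is quoted from \cite[Theorem 3.1]{S-3semi}, here is the route I would take to prove it. Write $a'=a/D$ and $b'=b/D$, so that $\gcd(a',b')=1$, $b=Db'$, and (since $D\le a<b$) $b'\ge 2$. The engine is the complete intersection criterion recalled at the start of Section~\ref{sec:3gens}: a $3$-generated numerical semigroup $\langle n_1,n_2,n_3\rangle$ is symmetric if and only if, after a permutation, $d:=\gcd(n_1,n_2)>1$ and $n_3\in\langle n_1/d,\,n_2/d\rangle$. First I would reduce to the numerical case. For general $j$ set $d_0=\gcd(j,a,b)=\gcd(j,D)$; then $\tfrac1{d_0}H_j=\langle j/d_0,\,j/d_0+a/d_0,\,j/d_0+b/d_0\rangle$ is the shift-$(j/d_0)$ member of the shifted family with gaps $a/d_0<b/d_0$ (whose gcd is $D/d_0$ and which is coprime to $j/d_0$), and ``$H_j$ symmetric'' means ``$\tfrac1{d_0}H_j$ symmetric''. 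Note that this family has the same associated $b'=(b/d_0)/(D/d_0)=b/D$. So it suffices to settle the numerical members and then translate, the threshold scaling as $k_{a/d_0,b/d_0}=k_{a,b}/d_0$.

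Assume then $\gcd(j,D)=1$, so $H_j=\langle j,j+a,j+b\rangle$ is numerical, and apply the criterion. The heart of the matter is that, once $j$ is large, only the pair $\{j,\,j+b\}$ can play the role $\{n_1,n_2\}$. Indeed, if the excluded generator is the largest, $j+b$, a nonnegative combination $\mu\frac{j}{d}+\nu\frac{j+a}{d}$ with $\mu+\nu=d$ reaches only $j+a<j+b$, while $\mu+\nu\neq d$ is ruled out for large $j$ by a size estimate; and if the excluded generator is the smallest, $j$, then $dj=\mu(j+a)+\nu(j+b)$ forces $\mu+\nu<d$, whence $(d-\mu-\nu)j=\mu a+\nu b$ has a bounded right side but a left side that grows with $j$. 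Making these two estimates quantitative is precisely what produces the explicit bound $j>k_{a,b}$. For the surviving pair, with $d=\gcd(j,b)$, symmetry means $d>1$ and $j+a=\mu\frac{j}{d}+\nu\frac{j+b}{d}$ for some $\mu,\nu\ge 0$; clearing denominators gives $(d-\mu-\nu)j=\nu b-da$, and for $j$ large this forces $\mu+\nu=d$ and $\nu b=da$. Dividing $\nu b=da$ by $D$ and using $\gcd(a',b')=1$, it is solvable with $0\le\nu\le d$ exactly when $b'\mid d$, equivalently $b'\mid j$ (as $b'\mid b$); conversely, if $b'\mid j$ then writing $d=b'\delta$ the values $\nu=\delta a'$, $\mu=\delta(b'-a')$ yield $j+a$ and $d\ge b'>1$. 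Hence, in the numerical case, $H_j$ is symmetric if and only if $b'\mid j$.

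Feeding this through the reduction, a general $H_j$ is symmetric if and only if $b'\mid j/d_0$, i.e. $b'\gcd(j,D)\mid j$. It remains to identify this with $T\mid j$, which I would do prime by prime. For a prime $p$ with $\nu_p(a)\ge\nu_p(b)$ one has $p\nmid T$ and $\nu_p(b')=0$, so both conditions are vacuous; for $\nu_p(a)<\nu_p(b)$ one computes
\[
\nu_p\!\big(b'\gcd(j,D)\big)=\big(\nu_p(b)-\nu_p(a)\big)+\min\{\nu_p(j),\nu_p(a)\},
\]
and the inequality $\nu_p(b'\gcd(j,D))\le\nu_p(j)$ reduces, in both subcases $\nu_p(j)\gtrless\nu_p(a)$, to $\nu_p(j)\ge\nu_p(b)=\nu_p(T)$. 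Thus $b'\gcd(j,D)\mid j\iff T\mid j$. Since $T\mid j$ is a divisibility condition of period exactly $T$, the symmetric property occurs periodically in $\{H_j\}_{j>k_{a,b}}$ with principal period $T$.

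The main obstacle is the pair-elimination in the second paragraph together with the bookkeeping that sharpens the two ``largest/smallest generator'' estimates into the precise threshold $k_{a,b}=\max\{b(\tfrac{b-a}{D}-1),\tfrac{ba}{D}\}$; by contrast the valuation identification of $b'\gcd(j,D)\mid j$ with $T\mid j$, and the reduction to the numerical case, are routine once set up.
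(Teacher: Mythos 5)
The paper offers no proof of Lemma \ref{lemma:3semi-ci}: it is imported verbatim from \cite[Theorem 3.1]{S-3semi}, so there is nothing internal to compare your argument against. Your reconstruction is sound and very likely parallels the cited source: reduce to the numerical case by dividing out $d_0=\gcd(j,D)$, apply Herzog's complete-intersection criterion recalled at the start of Section \ref{sec:3gens}, eliminate two of the three possible splittings of the generators for $j$ large, and read off the condition $b/D\mid j$ from the surviving pair $\{j,j+b\}$; the closing valuation computation identifying $b'\gcd(j,D)\mid j$ with $T\mid j$, and the scaling $k_{a/d_0,b/d_0}=k_{a,b}/d_0$, are both correct. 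The one point you should not file under ``routine bookkeeping'' is the pair elimination at the exact threshold: the crude bounds $d\le a$, $d\le b$, $d\le b-a$ on the gcd $d$ occurring in the three cases only give thresholds of order $ab$ or $(b-a-1)b$, which strictly exceed $k_{a,b}$ whenever $D>1$ (for $a=4$, $b=6$ one has $k_{a,b}=12$ but $ab=24$). What makes the stated bound work is that after the reduction one has $\gcd(j,D)=1$, which forces $d$ to be coprime to $D$ in each case and hence to divide $b/D$, $a/D$, $(b-a)/D$ respectively; with these sharper bounds the estimates $j\le da\le ab/D$, $j\le d(b-a)-b\le b(\frac{b-a}{D}-1)$, $j\le db\le ab/D$ and $j\le (d-1)b\le b(\frac{b-a}{D}-1)$ all close exactly at $j>k_{a,b}$. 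Once that observation is made explicit, your argument is complete and yields the lemma with the precise threshold claimed.
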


\begin{proof} (of Theorem \ref{thm:3semi-res-periodic}).

We start with $j>k_{a,b}$.
By Lemma \ref{lemma:3semi-ci}, if $H_j$ is symmetric  then $H_{j+b}$ is symmetric, too, hence $\res(H_j)=\res(H_{j+b})=0$.

Assume  $H_j$ is not symmetric. By Lemma \ref{lemma:3semi-ci}, $H_{j+\ell b}$ is not symmetric for all $\ell \geq 0$.
Denote $A_{j+\ell b}$   the structure matrix \eqref{structure} of the non-symmetric semigroup $H_{j+\ell b}$.

For $(n_1, n_2, n_3)=(0, a, b)+j+\ell b$, it is proved in \cite[Theorem 2.2]{S-3semi} that for any $\ell \geq 0$
the middle equation in \eqref{eq:3semi-equations} is
\begin{equation}
\label{eq:middle}
\frac{b}{D} n_2= \frac{b-a}{D}n_1+ \frac{a}{D}n_3.
\end{equation}

This implies that
\begin{eqnarray*}
A_j=\left( \begin{array}{ccc} x_1^{(b-a)/D} & x_2^{a_2} & x_3^{a_3}\\
x_2^{b_2} & x_3^{a/D} & x_1^{b_1}
 \end{array}\right),
\end{eqnarray*}
where $a_2, a_3, b_1, b_2$  are positive integers (depending on $j$) such that $a_2+b_2=b/D$, by \eqref{eq:middle}.

Let $e=\gcd(a,b)/\gcd(j,a,b)$. Proposition 4.2 in \cite{S-3semi} explains how the equations \eqref{eq:3semi-equations} change when we shift up by $b$.
According to this result, only the last column of $A_j$ changes and  we obtain
\begin{eqnarray*}
A_{j+b}=\left( \begin{array}{ccc} x_1^{(b-a)/D} & x_2^{a_2} & x_3^{a_3+e}\\
x_2^{b_2} & x_3^{a/D} & x_1^{b_1+e}
 \end{array}\right).
\end{eqnarray*}
Iterating this, we have that
\begin{eqnarray*}
A_{j+\ell b}=\left( \begin{array}{ccc} x_1^{(b-a)/D} & x_2^{a_2} & x_3^{a_3+\ell e}\\
x_2^{b_2} & x_3^{a/D} & x_1^{b_1+\ell e}
 \end{array}\right), \text{ for }\ell \geq 0.
\end{eqnarray*}

Proposition \ref{prop:3-semi-trace-formula} gives
\begin{equation}
\label{eq:after-shift-2}
\res(H_{j+\ell b})=\min \{(b-a)/D, b_1+\ell e \} \cdot \min \{ a/D, a_3+\ell e\} \cdot \min \{a_2, b_2\}.
\end{equation}

For $\ell \geq \max\{ \frac{b-a}{D}-1, \frac{a}{D} \} = \frac{1}{b} k_{a,b}$ it is easy to see that $b_1+\ell e \geq (b-a)/D$ and
$a_3+\ell e \geq a/D$. Hence, \eqref{eq:after-shift-2} becomes
\begin{equation}
\label{eq:res-big-shift}
\res(H_{j+\ell b})= \min\{a_2, b_2\} \cdot a(b-a)/D^2,
\end{equation}
which is a formula not involving $\ell$.

The argument above shows that for any $j>2k_{a,b}$ we have that $\res (H_j)=\res(H_{j+b})$. This concludes the proof.
\end{proof}

\begin{Corollary}
\label{cor:bound-res-shifts}
With notation as in Theorem \ref{thm:3semi-res-periodic}, for $j>2k_{a,b}$ the residue of $H_j$ is an integer divisible by $(b-a)a/D^2$ and
$$
\res(H_j) < 8b^3/27D^3.
$$
\end{Corollary}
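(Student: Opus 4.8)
The plan is to read $\res(H_j)$ directly off the refined formula \eqref{eq:res-big-shift} obtained inside the proof of Theorem~\ref{thm:3semi-res-periodic}, and then to estimate the resulting product by the AM--GM inequality. First I would record that for $j>2k_{a,b}$ one may write $j=j_0+\ell b$ with $k_{a,b}<j_0\leq k_{a,b}+b$ and $\ell=(j-j_0)/b$. Since $\frac1b k_{a,b}=\max\{\tfrac{b-a}{D}-1,\tfrac{a}{D}\}$ is an integer and $\ell b=j-j_0>2k_{a,b}-(k_{a,b}+b)=k_{a,b}-b$, we get $\ell>\tfrac1b k_{a,b}-1$, hence $\ell\geq \tfrac1b k_{a,b}$. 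Therefore \eqref{eq:res-big-shift} applies to $H_j=H_{j_0+\ell b}$ and gives
$$
\res(H_j)=\min\{a_2,b_2\}\cdot\frac{a(b-a)}{D^2},
$$
where $a_2,b_2$ are the positive integers occurring in the structure matrix of the base semigroup $H_{j_0}$, and they satisfy $a_2+b_2=b/D$.

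Since $\min\{a_2,b_2\}$ is a positive integer, the displayed formula shows immediately that $\res(H_j)$ is an integer multiple of $(b-a)a/D^2$, which settles the divisibility assertion.

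For the upper bound I would introduce the three positive reals
$$
d_1=\frac{b-a}{D},\qquad d_2=\min\{a_2,b_2\},\qquad d_3=\frac{a}{D},
$$
so that $\res(H_j)=d_1d_2d_3$. Here $d_1+d_3=b/D$, while $d_2<a_2+b_2=b/D$ because both $a_2$ and $b_2$ are at least $1$. Consequently $d_1+d_2+d_3<2b/D$, and the AM--GM inequality then yields
$$
\res(H_j)=d_1d_2d_3\leq\left(\frac{d_1+d_2+d_3}{3}\right)^{3}<\left(\frac{2b}{3D}\right)^{3}=\frac{8b^3}{27D^3},
$$
as required; the strictness comes entirely from the strict bound $d_2<b/D$, regardless of whether AM--GM itself is tight.

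There is essentially no hard analytic step once \eqref{eq:res-big-shift} is available, so the main point requiring care is the bookkeeping in the first paragraph: one must verify that every $j>2k_{a,b}$ can indeed be written as $j_0+\ell b$ with the base $j_0$ in the fundamental range $(k_{a,b},k_{a,b}+b]$ and with shift count $\ell\geq \tfrac1b k_{a,b}$, so that both hypotheses behind \eqref{eq:res-big-shift} hold simultaneously; checking that the threshold $2k_{a,b}$ is exactly what makes this possible is the one detail I would confirm. I would also note in passing that using the sharper $d_2\leq b/(2D)$ gives the stronger estimate $\res(H_j)\leq b^3/(8D^3)$, so the stated bound $8b^3/27D^3$ holds comfortably.
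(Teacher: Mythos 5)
Your proof is correct and follows essentially the same route as the paper's: both read the residue off formula \eqref{eq:res-big-shift} to obtain the divisibility statement, and both get the upper bound from the three-term AM--GM inequality applied to quantities summing to (less than) $2b/D$; your explicit bookkeeping of why every $j>2k_{a,b}$ falls under \eqref{eq:res-big-shift} is a detail the paper leaves implicit. The only point to add is a one-line disposal of the case where $H_j$ is symmetric, since \eqref{eq:res-big-shift} (and the structure matrix behind it) is only available for non-symmetric $H_j$, while $\res(H_j)=0$ trivially satisfies both assertions.
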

	
\begin{proof}
If $H_j$ is symmetric, the inequality to prove is clear.
Assume $H_j$ is not symmetric and $j>2k_{a,b}$. By \eqref{eq:res-big-shift}, we have $\res(H_{j })= \min\{a_2, b_2\} \cdot a(b-a)/D^2$, with
$a_2, b_2$ positive integers such that $a_2+b_2=b/D$.  This shows the first part of the claim. The second part is obtained from the following chain of inequalities
$$
\res(H_j) \leq \frac{a(b-a)}{D^2} \left(\frac{b}{D}-1\right) < \frac{ab(b-a)}{D^3} \leq \left( \frac{2b}{3} \right)^3 \cdot \frac{1}{D^3}=\frac{8b^3}{27D^3},
$$
where for the last inequality we used the known fact that  $\sqrt[3]{xyz} \leq (x+y+z)/3$ for $x,y,z >0$.
\end{proof}

\begin{Corollary}
With notation as in Theorem \ref{thm:3semi-res-periodic}, for $j>2k_{a,b}$ the semigroup $H_j$ is nearly Gorenstein if and only if $H_{j+b}$ is nearly Gorenstein.
\end{Corollary}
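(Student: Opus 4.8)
The plan is to reduce the statement entirely to the numerical equality of residues already established in Theorem~\ref{thm:3semi-res-periodic}, by passing through the characterization of the nearly Gorenstein property in terms of the residue. Recall from the discussion following \eqref{eq:res-definition} that for any numerical semigroup $H$ one has that $H$ is nearly Gorenstein if and only if $\res(H)\leq 1$. Hence the two assertions ``$H_j$ is nearly Gorenstein'' and ``$H_{j+b}$ is nearly Gorenstein'' translate, respectively, into the inequalities $\res(H_j)\leq 1$ and $\res(H_{j+b})\leq 1$.

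The next step is to invoke Theorem~\ref{thm:3semi-res-periodic} directly. For every $j>2k_{a,b}$ that result gives the exact equality $\res(H_j)=\res(H_{j+b})$. Consequently the conditions $\res(H_j)\leq 1$ and $\res(H_{j+b})\leq 1$ are literally one and the same, and the claimed equivalence follows immediately: $H_j$ is nearly Gorenstein if and only if $\res(H_j)\leq 1$, which holds if and only if $\res(H_{j+b})\leq 1$, that is, if and only if $H_{j+b}$ is nearly Gorenstein.

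Since all the genuine content---namely the eventual periodicity of the residue under the shift by $b$---is already carried by Theorem~\ref{thm:3semi-res-periodic}, there is no substantive obstacle in this corollary; it is a direct specialization of that periodicity to the threshold value $1$. The only point requiring a modicum of care is to retain the hypothesis $j>2k_{a,b}$ unchanged, as this is exactly the range in which the residue equality of Theorem~\ref{thm:3semi-res-periodic} is valid, so the same threshold is inherited verbatim.
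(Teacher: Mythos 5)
Your proposal is correct and is precisely the intended argument: the paper states this corollary without proof because it follows immediately from the characterization $\res(H)\leq 1$ $\iff$ $H$ nearly Gorenstein (noted after \eqref{eq:res-definition}) combined with the equality $\res(H_j)=\res(H_{j+b})$ for $j>2k_{a,b}$ from Theorem \ref{thm:3semi-res-periodic}.
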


We make a comment about the frequency of occurences of the symmetric, almost symmetric and nearly Gorenstein property in a shifted family.

\begin{Remark}
{\em Let $0<a<b$. For $j\geq 0$ we denote $H_j=\langle j, j+a, j+b\rangle$. We use the constant $k_{a,b}$ introduced in Theorem \ref{thm:3semi-res-periodic}.

 Lemma \ref{lemma:3semi-ci} shows that we find symmetric semigroups for arbitrarily large shifts $j$.
From the formula \eqref{eq:T} for the principal period $T$ we infer that $T >1$, otherwise $a=b$, which is false.
This means that there is no $j_0$ such that $H_j$ is symmetric for all $j>j_0$.

When $b=2a$, the semigroup $H_j$ is generated by an arithmetic sequence.
Using  Proposition \ref{prop:arithmetic-ng}  we get that $H_j$ is nearly Gorenstein for all $j>0$.
On the other hand, by Lemma \ref{lemma:3semi-ci}, when $j>b$ we have that $H_j$ is symmetric if and only if  $j$ is divisible by $2^{\nu_2(b)}$.

It is however possible that in the shifted family $\{H_j\}_{j\geq 0}$, for large 	$j$ the only nearly Gorenstein semigroups are the symmetric ones.
Indeed, if $a, b$ are coprime and $a>1$, by Corollary \ref{cor:bound-res-shifts} we have that for $j>2k_{a,b}$,  either $\res(H)=0$, or $\res(H) \geq a(b-a) >1$.

The first author  was informed by  Kei-ichi Watanabe  that there are only finitely many almost symmetric semigroups in the shifted family $\{H_j\}_{j\geq 0}$.
This can also be seen as follows.
  According to Nari et al. \cite{NNW}, the structure matrix $A_j$ for an almost symmetric semigroup $H_j$ must have one row consisting
of linear forms. However, it is proven in \cite[Proposition 4.2]{S-3semi} that for $j> k_{a,b}$ the matrix $A_{j+b}$ is obtained from $A_j$
by increasing the exponents of the last column  by $\gcd(a,b)/\gcd(a,b,j)$. This shows that for $j>k_{a,b}+b$ the semigroup $H_j$ is not almost symmetric. The analysis of the occurence of infinitely many almost symmetric semigroups  in the shifted family of a $4$-generated numerical semigroup is made in  \cite[Section 7]{HeWa}.
}
\end{Remark}

\appendix 
\section{Trace ideals as conductor ideals in local rings}

Some statements in this paper can be formulated for extensions of $1$-dimensional local domains where the conductor is defined. In this appendix we would like to understand the case when the trace of an ideal coincides  with the conductor ideal, and some consequences this draws.

Hereafter, $(R,\mm)$ is a $1$-dimensional Cohen-Macaulay local ring with canonical module $\omega_R$.
Assume now that $(R,\mm)\subset (\widetilde{R},\nn)$ is an extension of local rings,  where $\widetilde{R}$ is a finite $R$-module and a discrete valuation ring
such that $\widetilde{R}\subseteq Q(R)$.  We also assume  that the inclusion map $R\to \widetilde{R}$ induces an isomorphism $R/\mm\to \widetilde{R}/\nn$. 
The set 
$$
\mathcal{C}_{\widetilde{R}/R}= \{x \in R: x \widetilde{R} \subseteq R \}
$$
is called the {\em conductor} of this extension and it is an ideal of both $R$ and $\widetilde{R}$.  
With the notation introduced we  have

\begin{Proposition}
\label{conductor}
For any  ideal $I\subset R$ one has $\mathcal{C}_{\widetilde{R}/R}\subseteq \tr(I)$.
In particular, $\mathcal{C}_{\widetilde{R}/R}\subseteq\tr(\omega_R)$.
\end{Proposition}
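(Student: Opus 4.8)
The plan is to establish the stronger pointwise statement: each element $c\in\mathcal{C}_{\widetilde{R}/R}$ lies in the image of some $R$-linear map $I\to R$, and hence in $\tr(I)=\sum_{\varphi\in\Hom_R(I,R)}\varphi(I)$. Since both sides are $R$-submodules of $R$, it suffices to treat one $c$ at a time, and I may assume $I\neq 0$ (if $I=0$ then $\tr(I)=0$; the relevant ideals, and $\omega_R$ in particular, are nonzero). Recall that by the appendix's standing hypotheses $R$ is a domain, so $Q(R)$ is a field containing $\widetilde{R}$ and I may divide freely inside $Q(R)$.

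The key structural input is that $\widetilde{R}$ is a discrete valuation ring; let $v$ denote its valuation, normalized so that $v\geq 0$ on $\widetilde{R}\supseteq R$. First I would extend $I$ to $\widetilde{R}$: since $\widetilde{R}$ is a PID, the ideal $I\widetilde{R}$ is principal, and I would choose a generator of the form $\alpha\in I$ realizing the minimum $v(\alpha)=\min\{v(x):x\in I\setminus\{0\}\}$, which exists because these values lie in $\NN$. By minimality, $v(x)\geq v(\alpha)$, equivalently $x/\alpha\in\widetilde{R}$, for every $x\in I$.

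Next, for a fixed $c\in\mathcal{C}_{\widetilde{R}/R}$, I would define $\psi_c\colon I\to Q(R)$ by $\psi_c(x)=cx/\alpha$, which is manifestly $R$-linear. The crucial point to verify is that $\psi_c$ takes values in $R$: for $x\in I$ one has $x/\alpha\in\widetilde{R}$ by the previous step, while $c\widetilde{R}\subseteq R$ by the definition of the conductor, so $\psi_c(x)=c\cdot(x/\alpha)\in R$. Thus $\psi_c\in\Hom_R(I,R)$, and evaluating at $\alpha\in I$ gives $\psi_c(\alpha)=c$, whence $c\in\psi_c(I)\subseteq\tr(I)$. As $c$ is arbitrary, this proves $\mathcal{C}_{\widetilde{R}/R}\subseteq\tr(I)$.

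The step I expect to be most delicate is exactly this interplay: one must select $\alpha$ \emph{inside} $I$ so that the evaluation $\psi_c(\alpha)=c$ is legitimate, while simultaneously using the DVR property to force $x/\alpha\in\widetilde{R}$ for all $x\in I$ so that $\psi_c$ lands in $R$. Both features are needed at once, and they are precisely what the hypotheses on $\widetilde{R}$ supply; note that the residue-field isomorphism $R/\mm\cong\widetilde{R}/\nn$ is not used here (it will instead govern the characterization of \emph{equality}). Finally, for the asserted special case I would realize $\omega_R$ as an honest ideal of $R$; since $\tr(-)$ depends only on the isomorphism class of the module, applying the inclusion to this ideal yields $\mathcal{C}_{\widetilde{R}/R}\subseteq\tr(\omega_R)$.
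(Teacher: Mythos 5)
Your proof is correct and follows essentially the same route as the paper: both choose $\alpha\in I$ (the paper's $f$) generating $I\widetilde{R}$ over $\widetilde{R}$ and then show that multiplication by $c/\alpha$ maps $I$ into $R$ via $c\widetilde{R}\subseteq R$, so that $c=(c/\alpha)\cdot\alpha$ lands in $\tr(I)$. The only cosmetic difference is that you exhibit the homomorphism $\psi_c$ directly, whereas the paper phrases the same computation as $c/\alpha\in I^{-1}$ and invokes $\tr(I)=I\cdot I^{-1}$ from \cite[Lemma 1.1]{HHS}.
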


\begin{proof}
There exists $f\in I$ such that $I\widetilde{R}=(f)\widetilde{R}$.
Now let $g\in \mathcal{C}_{\widetilde{R}/R}$. Then $(g/f)I\subseteq (g/f)I\widetilde{R}=g\widetilde{R}\subseteq R$.
Thus $g/f\in I^{-1}$. Since $g=(g/f)f$ with $f\in I$, it follows that $g\in I^{-1}\cdot I=\tr(I)$, where for the last equation we used \cite[Lemma 1.1]{HHS}.
\end{proof}

\begin{Corollary}
\label{mmtrace}
$R$ is nearly Gorenstein, if $\mathcal{C}_{\widetilde{R}/R}=\mm$.
\end{Corollary}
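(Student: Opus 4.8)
The plan is that this corollary follows essentially at once from Proposition~\ref{conductor}, so most of the work has already been done. First I would specialize the ``in particular'' clause of Proposition~\ref{conductor} to the canonical module, obtaining the inclusion $\mathcal{C}_{\widetilde{R}/R}\subseteq\tr(\omega_R)$. This inclusion holds unconditionally in the setting fixed at the start of the appendix, namely $(R,\mm)\subset(\widetilde{R},\nn)$ with $\widetilde{R}$ a discrete valuation ring that is finite over $R$ and lies in $Q(R)$, and with the inclusion inducing an isomorphism $R/\mm\to\widetilde{R}/\nn$.

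Next I would feed in the hypothesis $\mathcal{C}_{\widetilde{R}/R}=\mm$. Substituting it into the inclusion just obtained gives $\mm\subseteq\tr(\omega_R)$, which is exactly the defining condition for $R$ to be nearly Gorenstein recalled in the Introduction. Thus the conclusion is reached in a single substitution, and there is no genuine obstacle to overcome: the only content is the observation that the conductor inclusion of Proposition~\ref{conductor} already forces $\tr(\omega_R)$ to contain $\mm$ as soon as the conductor equals $\mm$. One might additionally remark that, since $\mathcal{C}_{\widetilde{R}/R}$ is a proper ideal of $R$ whenever $R\neq\widetilde{R}$ and hence always contained in $\mm$, the hypothesis $\mathcal{C}_{\widetilde{R}/R}=\mm$ says precisely that the conductor is as large as it can be; this is the borderline case that makes $R$ nearly Gorenstein.
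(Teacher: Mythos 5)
Your proof is correct and is exactly the intended argument: the paper states this corollary without proof precisely because it is the immediate specialization of Proposition~\ref{conductor} to $I=\omega_R$ followed by the substitution $\mathcal{C}_{\widetilde{R}/R}=\mm$, giving $\mm\subseteq\tr(\omega_R)$, which is the definition of nearly Gorenstein.
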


For the rest of this section, the ideal quotients are computed in $Q(R)=Q(\widetilde{R})$.  The following lemma will be used in the proof of Proposition \ref{end}.

\begin{Lemma}
\label{lemma:rtilde}
$ \widetilde{R}=R:\mathcal{C}_{\widetilde{R}/R} $
\end{Lemma}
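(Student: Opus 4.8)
The plan is to establish the two inclusions $\widetilde{R}\subseteq R:\mathcal{C}_{\widetilde{R}/R}$ and $R:\mathcal{C}_{\widetilde{R}/R}\subseteq \widetilde{R}$ separately, all colon ideals being formed inside $Q(R)=Q(\widetilde{R})$. The first inclusion is immediate and I would dispose of it at once: since $\mathcal{C}_{\widetilde{R}/R}$ is by construction an ideal of $\widetilde{R}$, any $x\in\widetilde{R}$ satisfies $x\,\mathcal{C}_{\widetilde{R}/R}\subseteq \widetilde{R}\cdot\mathcal{C}_{\widetilde{R}/R}=\mathcal{C}_{\widetilde{R}/R}\subseteq R$, so $x\in R:\mathcal{C}_{\widetilde{R}/R}$.

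For the reverse inclusion, the key observation I would isolate first is that $\mathcal{C}_{\widetilde{R}/R}$ is the \emph{largest} ideal of $\widetilde{R}$ contained in $R$. Indeed, if $J\subseteq R$ is any ideal of $\widetilde{R}$, then each $j\in J$ lies in $R$ and satisfies $j\widetilde{R}\subseteq J\subseteq R$, so $j\in\mathcal{C}_{\widetilde{R}/R}$ by the very definition of the conductor, whence $J\subseteq\mathcal{C}_{\widetilde{R}/R}$. Now I would take $x\in R:\mathcal{C}_{\widetilde{R}/R}$, so that $x\,\mathcal{C}_{\widetilde{R}/R}\subseteq R$. Because $\mathcal{C}_{\widetilde{R}/R}$ is an $\widetilde{R}$-module, the set $x\,\mathcal{C}_{\widetilde{R}/R}$ is again an $\widetilde{R}$-submodule of $Q(R)$, and since $x\,\mathcal{C}_{\widetilde{R}/R}\subseteq R\subseteq\widetilde{R}$ it is in fact an ideal of $\widetilde{R}$ contained in $R$. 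The maximality just proved then forces $x\,\mathcal{C}_{\widetilde{R}/R}\subseteq\mathcal{C}_{\widetilde{R}/R}$.

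It remains to pass from $x\,\mathcal{C}_{\widetilde{R}/R}\subseteq\mathcal{C}_{\widetilde{R}/R}$ to $x\in\widetilde{R}$, and this is the only step carrying real content, the step where the hypotheses on $\widetilde{R}$ must be used. As $\widetilde{R}$ is module-finite over $R$, its conductor $\mathcal{C}_{\widetilde{R}/R}$ is a nonzero ideal (a common denominator of a finite generating set of $\widetilde{R}$ lies in it), hence a faithful finitely generated $\widetilde{R}$-module; the relation $x\,\mathcal{C}_{\widetilde{R}/R}\subseteq\mathcal{C}_{\widetilde{R}/R}$ therefore exhibits $x$ as integral over $\widetilde{R}$ by the determinant trick, and since the discrete valuation ring $\widetilde{R}$ is integrally closed in $Q(\widetilde{R})=Q(R)$, we conclude $x\in\widetilde{R}$. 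In this one-dimensional setting I would probably prefer the more transparent valuation-theoretic finish: every ideal of the DVR $\widetilde{R}$ is a power of $\nn$, so writing $\mathcal{C}_{\widetilde{R}/R}=t^m\widetilde{R}$ for a uniformizer $t$, the inclusion reads $x t^m\widetilde{R}\subseteq t^m\widetilde{R}$, i.e. $x\widetilde{R}\subseteq\widetilde{R}$, which gives $x\in\widetilde{R}$ directly.

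The main obstacle is thus not any computation but keeping the bookkeeping honest: verifying that $x\,\mathcal{C}_{\widetilde{R}/R}$ genuinely is an $\widetilde{R}$-ideal sitting inside $R$ so that the maximality of the conductor applies, and that $\mathcal{C}_{\widetilde{R}/R}\neq 0$ so that integrality (equivalently, cancellation of $t^m$) is legitimate. I note that the argument uses only module-finiteness and the integral closedness of $\widetilde{R}$; in particular the residue field hypothesis $R/\mm\cong\widetilde{R}/\nn$ is not needed for this lemma, though it is available.
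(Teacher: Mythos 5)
Your proof is correct, but it follows a genuinely different route from the paper's. The paper argues by contradiction: taking $f\in R:\mathcal{C}_{\widetilde{R}/R}$ with $f=\epsilon t^{-a}\notin\widetilde{R}$ and writing $\mathcal{C}_{\widetilde{R}/R}=t^{b}\widetilde{R}$, it deduces $b\geq a$ and $t^{b-1}\in R$, and then shows $t^{b-1}\widetilde{R}\subseteq R$ --- contradicting the minimality of $b$ --- by splitting an arbitrary unit $\nu\in\widetilde{R}$ as $\nu=h+h_1$ with $h\in R$ and $h_1\in\nn$; this splitting is exactly where the hypothesis $R/\mm\iso\widetilde{R}/\nn$ enters. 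Your argument instead goes directly through the characterization of $\mathcal{C}_{\widetilde{R}/R}$ as the largest $\widetilde{R}$-ideal contained in $R$: for $x\in R:\mathcal{C}_{\widetilde{R}/R}$ the set $x\,\mathcal{C}_{\widetilde{R}/R}$ is an $\widetilde{R}$-ideal inside $R$, hence inside $\mathcal{C}_{\widetilde{R}/R}$, and cancelling the uniformizer (equivalently, using that $\End$ of a nonzero ideal of a DVR is the DVR itself) gives $x\in\widetilde{R}$. The two points that needed care --- that $x\,\mathcal{C}_{\widetilde{R}/R}$ really is an $\widetilde{R}$-ideal sitting inside $\widetilde{R}$, and that $\mathcal{C}_{\widetilde{R}/R}\neq 0$ because $\widetilde{R}$ is module-finite over $R$ --- you address explicitly. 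What your approach buys is brevity and generality: as you correctly observe, the residue-field hypothesis is not used, and in effect you prove $R:\mathcal{C}_{\widetilde{R}/R}=\mathcal{C}_{\widetilde{R}/R}:\mathcal{C}_{\widetilde{R}/R}=\widetilde{R}$ for any module-finite integrally closed overring of this kind. One caveat: the paper recycles its unit-splitting step later (``arguing as in the proof of Lemma \ref{lemma:rtilde}'' inside the proof of Proposition \ref{end}), so the residue-field hypothesis is still needed elsewhere in the appendix even though your proof of this particular lemma dispenses with it.
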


\begin{proof}
It is clear from the definition of $\mathcal{C}_{\widetilde{R}/R}$ that $ \widetilde{R} \subseteq R:\mathcal{C}_{\widetilde{R}/R}$.

For the reverse inclusion, let $f\in  R:\mathcal{C}_{\widetilde{R}/R}$. If $f\notin \widetilde{R}$, then since $Q(R)=Q(\widetilde{R})$ 
we may write $f=\epsilon t^{-a}$ with $\epsilon$ invertible in $\widetilde{R}$, $a$ positive integer and $t$   a generator of  the maximal ideal of $\widetilde{R}$.
Since $\mathcal{C}_{\widetilde{R}/R}$ is an ideal in $\widetilde{R}$, there exists $b>0$ such that $\mathcal{C}_{\widetilde{R}/R}=(t^b)\widetilde{R}$.

The property $f\in R:\mathcal{C}_{\widetilde{R}/R}$ now reads as $\epsilon t^{-a} \mathcal{C}_{\widetilde{R}/R} \subseteq R$, or equivalently, that
$ \mathcal{C}_{\widetilde{R}/R} \subseteq t^a R$. This implies that $b\geq a$.
Since we may write $t^{b-1}= f\cdot g$ where $g=\epsilon^{-1}t^{a+b-1}$ is clearly in $\mathcal{C}_{\widetilde{R}/R}$, it follows that $t^{b-1}\in R$.
 
We claim that $t^{b-1}\widetilde{R} \subseteq  R$. This will then lead to a contradiction, since $t^{b-1}\not\in \mathcal{C}_{\widetilde{R}/R}$.
It is clear that $t^{b-1}\nn= \mathcal{C}_{\widetilde{R}/R}\subseteq R$. 
Thus is suffices to show that if $\nu\in\widetilde{R}$ is a unit, then $\nu t^{b-1}\in R$.
To prove this, we use our assumption that $R/\mm\to \widetilde{R}/\nn$ is an isomorphism.
In order to simplify notation we may assume that $R/\mm= \widetilde{R}/\nn$.
Then this  implies that  there exists $h\in R$ such that $\nu-h\in \nn$.
Thus, $\nu =h+h_1$ with $h\in R$ and $h_1\in \nn$, and therefore  $\nu t^{b-1}=h t^{b-1}+h_1 t^{b-1}$.
Since both summands on the right hand side of this equation belong to $R$, the claim follows.

This concludes the proof of the inclusion $R:\mathcal{C}_{\widetilde{R}/R} \subseteq \widetilde{R}$.
\end{proof}

 Our next result gives several characterizations of the situation when $\tr(I)=\mathcal{C}_{\widetilde{R}/R}$, in terms of the fractionary ideal $I^{-1}$.
We first recall that  for any fractionary ideal $J$ of $R$,  the  ideal quotient $J:J$ may be identified with the endomorphism ring $\End(J)$ of $J$, see \cite[Lemma 3.14]{Lindo}.

\begin{Proposition}
\label{end}
Let $I\subset R$ be an ideal. The following conditions are equivalent:
\begin{enumerate}
\item[{\em (i)}] $\tr(I)=\mathcal{C}_{\widetilde{R}/R}$.

\item[{\em (ii)}] $\End(I^{-1})=\widetilde{R}$.

\item[{\em (iii)}] $I^{-1}\iso \widetilde{R}$.
\end{enumerate}
\end{Proposition}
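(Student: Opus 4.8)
The plan is to pivot everything on the single identity
$$
R:\tr(I)=\End(I^{-1}),
$$
which holds for every ideal $I$ and converts the colon dual of the trace into the endomorphism ring of $I^{-1}$. To establish it I would first invoke \cite[Lemma 1.1]{HHS} to write $\tr(I)=I\cdot I^{-1}$ (legitimate because $Q(R)=Q(\widetilde{R})$ is a field, so $R$ is a domain), and then apply the standard colon--adjunction $R:(JK)=(R:J):K$ with $J=I$ and $K=I^{-1}$. This gives $R:\tr(I)=(R:I):I^{-1}=I^{-1}:I^{-1}$, and the latter is $\End(I^{-1})$ by \cite[Lemma 3.14]{Lindo}. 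I would also record the two dual descriptions of the conductor: $R:\mathcal{C}_{\widetilde{R}/R}=\widetilde{R}$ is precisely Lemma \ref{lemma:rtilde}, while $R:\widetilde{R}=\mathcal{C}_{\widetilde{R}/R}$ is immediate from the definition of the conductor, since any $x$ with $x\widetilde{R}\subseteq R$ already lies in $R$.

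With these in hand the equivalence (i)$\iff$(ii) is almost formal. For (i)$\Rightarrow$(ii) I apply $R:(-)$ to $\tr(I)=\mathcal{C}_{\widetilde{R}/R}$ and read off $\End(I^{-1})=R:\mathcal{C}_{\widetilde{R}/R}=\widetilde{R}$. For (ii)$\Rightarrow$(i) the identity gives $R:\tr(I)=\widetilde{R}$, so $\widetilde{R}\cdot\tr(I)\subseteq R$; since $\tr(I)$ is an ideal of $R$, each of its elements $y$ satisfies $y\widetilde{R}\subseteq R$, i.e. $\tr(I)\subseteq\mathcal{C}_{\widetilde{R}/R}$. Combined with the reverse inclusion $\mathcal{C}_{\widetilde{R}/R}\subseteq\tr(I)$ from Proposition \ref{conductor}, this yields equality. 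This route deliberately avoids having to know whether $\tr(I)$ is reflexive.

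For (ii)$\iff$(iii) I would argue through the $\widetilde{R}$-module structure of $I^{-1}$. For (iii)$\Rightarrow$(ii): any $R$-module isomorphism between the fractionary ideals $\widetilde{R}$ and $I^{-1}$, extended $Q(R)$-linearly, is multiplication by a single element $\alpha\in Q(R)$ with $\alpha\neq 0$, so $I^{-1}=\alpha\widetilde{R}$; hence $\End(I^{-1})=\alpha\widetilde{R}:\alpha\widetilde{R}=\widetilde{R}:\widetilde{R}=\widetilde{R}$, the last equality because $\widetilde{R}$ is a ring (giving $\supseteq$) and $x\widetilde{R}\subseteq\widetilde{R}$ forces $x\in\widetilde{R}$ (giving $\subseteq$). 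For (ii)$\Rightarrow$(iii): the equality $\End(I^{-1})=I^{-1}:I^{-1}=\widetilde{R}$ says exactly that $I^{-1}$ is a module over $\widetilde{R}$; it is finitely generated over $R$, hence over $\widetilde{R}$, and it is torsion-free of rank one inside $Q(R)$. Since $\widetilde{R}$ is a discrete valuation ring, a finitely generated torsion-free module of rank one is free of rank one, so $I^{-1}\iso\widetilde{R}$.

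I do not expect a serious obstacle once the identity $R:\tr(I)=\End(I^{-1})$ is isolated; the remaining points merely require care in the bookkeeping, namely checking that every colon quotient is taken in $Q(R)=Q(\widetilde{R})$ as stipulated, and confirming finite generation of $I^{-1}$ over $\widetilde{R}$ so that the structure theorem over the discrete valuation ring applies in (ii)$\Rightarrow$(iii). The most delicate verification is the step $\tr(I)\subseteq\mathcal{C}_{\widetilde{R}/R}$ in (ii)$\Rightarrow$(i), where one must use that $\tr(I)$ already sits inside $R$, so that $\widetilde{R}\cdot\tr(I)\subseteq R$ literally places $\tr(I)$ into the conductor.
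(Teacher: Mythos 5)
Your proof is correct, and for the implication (ii)$\Rightarrow$(i) it is genuinely different from --- and simpler than --- the paper's argument. Both you and the paper rest (i)$\Rightarrow$(ii) on the same computation $R:\tr(I)=R:(I\cdot I^{-1})=(R:I):I^{-1}=\End(I^{-1})$ combined with Lemma~\ref{lemma:rtilde}, and the equivalence (ii)$\Leftrightarrow$(iii) is handled in essentially the same way in both places (an ideal of a discrete valuation ring is free of rank one). The divergence is in (ii)$\Rightarrow$(i): the paper argues by contradiction, comparing the $\widetilde{R}$-ideals $\tr(I)\widetilde{R}=t^a\widetilde{R}$ and $\mathcal{C}_{\widetilde{R}/R}=t^b\widetilde{R}$ and rerunning the residue-field argument from Lemma~\ref{lemma:rtilde} to show $t^{b-1}\in\mathcal{C}_{\widetilde{R}/R}$, which uses the full strength of the hypotheses on $\widetilde{R}$. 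You instead observe that (ii) gives $\widetilde{R}\subseteq R:\tr(I)$, hence $\widetilde{R}\cdot\tr(I)\subseteq R$, and since $\tr(I)\subseteq R$ this places $\tr(I)$ directly inside the conductor; Proposition~\ref{conductor} supplies the reverse inclusion. This is a clean, direct two-line argument that needs only the inclusion $\widetilde{R}\subseteq\End(I^{-1})$ rather than equality, and it avoids both the valuation bookkeeping and the residue-field hypothesis at this step. The supporting identities you invoke ($\tr(I)=I\cdot I^{-1}$ from \cite[Lemma 1.1]{HHS}, the colon adjunction, $\End(J)=J:J$ from \cite[Lemma 3.14]{Lindo}, and $R:\widetilde{R}=\mathcal{C}_{\widetilde{R}/R}$) are all legitimate in this setting, and your care about finite generation of $I^{-1}$ over $\widetilde{R}$ in (ii)$\Rightarrow$(iii) is well placed.
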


\begin{proof}
(i) \implies(ii): If $\tr(I)=\mathcal{C}_{\widetilde{R}/R}$, then using Lemma \ref{lemma:rtilde} and the  remark following it, we get
$$
 \widetilde{R}=R:\mathcal{C}_{\widetilde{R}/R}   =R:(I \cdot I^{-1})=(R:I):I^{-1}=\End(I^{-1}).
$$

(ii)\implies (i): Suppose that $\tr(I)\neq \mathcal{C}_{\widetilde{R}/R}$.  Then $\mathcal{C}_{\widetilde{R}/R}$ is properly contained in $\tr(I)$.
It follows that
 $\mathcal{C}_{\widetilde{R}/R}$ is properly  contained in $\tr(I)\widetilde{R}$. Let $t$ be  generator of the maximal ideal of $\widetilde{R}$.
Then there exist integers $a <b$ such that $\tr(I)\widetilde{R}=t^a\widetilde{R}$ and $\mathcal{C}_{\widetilde{R}/R}=t^b\widetilde{R}$.
  Clearly, $t^{b}\in \mathcal{C}_{\widetilde{R}/R}\widetilde{R}=\mathcal{C}_{\widetilde{R}/R}$.
Since $\tr(I)\widetilde{R}$ is a principal ideal, there exists $f\in \tr(I)$ such that $\tr(I)\widetilde{R}= (f)\widetilde{R}$. 
Therefore, there exists $u$ invertible in $\widetilde{R}$ such that $t^a=u\cdot f$.

Since by assumption  $\widetilde{R}=\End(I^{-1})$ (which is $R:\tr(I)$),   we obtain that $t^a\in R$. We may write $t^{b-1}=t^{b-a-1}\cdot t^a$, 
where  $t^{b-a-1}\in\widetilde{R}$ and $t^a\in \tr(I)$. Using again that $\widetilde{R}=R:\tr(I)$, it follows that $t^{b-1}\in R$.
Arguing as in the proof of Lemma \ref{lemma:rtilde} we obtain $t^{b-1} \in \mathcal{C}_{\widetilde{R}/R}$, which is a contradiction.

(ii)\implies  (iii):   If $\End(I^{-1})=\widetilde{R}$, then $I^{-1}\widetilde{R}=I^{-1}$. Since any nonzero $\widetilde{R}$-ideal is isomorphic to $\widetilde{R}$, the assertion follows.

(iii)\implies (ii) is obvious.
 \end{proof}

For an $R$-module $M$ we let $e(M)$ denote its multiplicity.

\begin{Corollary}
\label{maxtr}
Let $R$ be as before,  and assume in addition that   $R$ is of the form
$R=S/J$  with $(S,\nn)$ regular local ring of dimension $3$ and $J\subseteq \nn^2$.

If $\tr(\omega_R)= \mathcal{C}_{\widetilde{R}/R}$ then   $e(R) \leq \mu(J)$.
Moreover, if $R$ is an almost complete intersection, then $R$ has minimal multiplicity.
\end{Corollary}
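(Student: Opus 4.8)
The plan is to push everything onto the minimal free resolution of $R=S/J$ over $S$, and to use Proposition \ref{end} to convert the hypothesis $\tr(\omega_R)=\mathcal C_{\widetilde R/R}$ into the statement $\omega_R^{-1}\cong\widetilde R$.

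First I would record the numerical input. Since $R=S/J$ is Cohen--Macaulay of dimension $1$ and $S$ is regular of dimension $3$, the ring $R$ has codimension $2$, so $\pd_S R=2$ by the Auslander--Buchsbaum formula and the minimal free resolution has the shape $0\to S^{\beta_2}\xrightarrow{\psi}S^{\beta_1}\to S\to R\to 0$. As $R$ is a torsion $S$-module, comparing ranks gives $\beta_1-\beta_2=1$; since $\beta_1=\mu(J)$ and $\beta_2$ equals the Cohen--Macaulay type of $R$ (the last Betti number), this yields $\mu(J)=\type(R)+1$. Thus the inequality to be proved is exactly $e(R)\le\type(R)+1$. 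Next I would bring in the hypothesis: by Proposition \ref{end}, $\tr(\omega_R)=\mathcal C_{\widetilde R/R}$ is equivalent to $\omega_R^{-1}\cong\widetilde R$. Because $R\subseteq\widetilde R$ induces $R/\mm\cong\widetilde R/\nn$ and $\widetilde R$ is a discrete valuation ring, one has $\mm\widetilde R=t^{e(R)}\widetilde R$ (the least value of an element of $\mm$ is the multiplicity), whence $\mu_R(\widetilde R)=\length_R(\widetilde R/\mm\widetilde R)=e(R)$. Combining these gives the key identity $e(R)=\mu_R(\widetilde R)=\mu_R(\omega_R^{-1})$, and it remains to bound $\mu_R(\omega_R^{-1})$.

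For this I would realize $\omega_R$ as a canonical ideal inside $R$ (possible since $R$ is a domain, hence generically Gorenstein) and apply $\Hom_R(-,R)$ to $0\to\omega_R\to R\to R/\omega_R\to 0$. Using $\Hom_R(R/\omega_R,R)=0$ and $\Ext^1_R(R,R)=0$ produces the short exact sequence $0\to R\to\omega_R^{-1}\to\Ext^1_R(R/\omega_R,R)\to 0$, so that $\mu_R(\omega_R^{-1})\le 1+\mu_R(\Ext^1_R(R/\omega_R,R))$. The proof therefore reduces to the estimate $\mu_R(\Ext^1_R(R/\omega_R,R))\le\type(R)$, which I expect to be the main obstacle. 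The route I would take is first to observe that $R/\omega_R$ is Gorenstein Artinian: applying instead $\Hom_R(-,\omega_R)$ to the same sequence and using $\Hom_R(\omega_R,\omega_R)=R$ together with local duality gives $0\to\omega_R\to R\to(R/\omega_R)^\vee\to 0$, so $(R/\omega_R)^\vee\cong R/\omega_R$ and the socle of $R/\omega_R$ is one--dimensional. Then I would use local (canonical) duality to rewrite $\Ext^1_R(R/\omega_R,R)$ as an explicitly presented module whose number of generators is governed by $\mu(\omega_R)=\type(R)$, giving the desired bound.

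Combining the two displayed facts yields $e(R)\le\type(R)+1=\mu(J)$. For the final assertion I would note that $J\subseteq\nn^2$ forces $\embdim R=\dim S=3$, while $\height J=\dim S-\dim R=2$, so $R$ is an almost complete intersection precisely when $\mu(J)=3$. In that case the inequality just proved together with Abhyankar's bound $e(R)\ge\embdim R-\dim R+1=3$ gives $3\le e(R)\le\mu(J)=3$, i.e. $e(R)=\embdim R-\dim R+1$, which is minimal multiplicity.
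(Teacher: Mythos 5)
Your proof follows the paper's skeleton up to the one step that carries the real content. Like the paper, you use Proposition \ref{end} to turn the hypothesis into $\omega_R^{-1}\cong\widetilde R$, you compute $e(R)=e(\widetilde R)=\dim_{R/\mm}\bigl(\widetilde R/\mm\widetilde R\bigr)=\mu_R(\widetilde R)=\mu_R(\omega_R^{-1})$, you read off $\mu(J)=\type(R)+1$ from the length-two resolution of $R$ over $S$, and you close the almost complete intersection case with Abhyankar's inequality exactly as the paper does. All of that is fine. The divergence is in how you bound $\mu_R(\omega_R^{-1})$, and there your argument has a genuine gap: everything hinges on the estimate $\mu_R\bigl(\Ext^1_R(R/\omega_R,R)\bigr)\le\type(R)$, which you yourself flag as ``the main obstacle'' and never prove. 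The tool you propose does not reach it. Local duality over the one-dimensional Cohen--Macaulay ring $R$ dualizes into $\omega_R$, not into $R$: for the finite-length module $N=R/\omega_R$ it identifies $\Ext^1_R(N,\omega_R)$ with the Matlis dual $N^\vee$, so the self-duality of $N$ that you correctly establish controls $\Ext^1_R(N,\omega_R)$ (which is cyclic), whereas what you need is $\Ext^1_R(N,R)\cong\omega_R^{-1}/R$. The long exact sequence coming from $0\to\omega_R\to R\to N\to 0$ only yields $\mu\bigl(\Ext^1_R(N,R)\bigr)\le 1+\mu\bigl(\Ext^1_R(N,N)\bigr)$, which is not the bound you want, and realizing $\Ext^1_R(N,R)$ as a subquotient of $R^{\type(R)}$ via a minimal presentation of $N$ does not bound its minimal number of generators either. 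A telling warning sign is that your argument for this key inequality nowhere uses the hypothesis $\dim S=3$.

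That hypothesis is exactly what the paper exploits at this point: since $R=S/J$ is Cohen--Macaulay of codimension two, the Hilbert--Burch theorem gives a resolution $0\to S^{g-1}\to S^{g}\to J\to 0$ with $g=\mu(J)$, and \cite[Corollary 3.4]{HHS} provides an explicit generating set of $\omega_R^{-1}$ with $g$ elements (quotients of maximal minors of the relation matrix), whence $\mu_R(\omega_R^{-1})\le g$ immediately. To repair your proof you must either supply an independent argument for $\mu_R(\omega_R^{-1}/R)\le\type(R)$ that genuinely uses the codimension-two structure, or replace this portion by the citation; as written, the proof is incomplete at its central step.
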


\begin{proof}
Since $R$ is a $1$-dimensional domain, it it Cohen-Macaulay and $\projdim_S (R)=1$. 
Thus $J$ has a minimal presentation
       $ 0 \to  S^{g-1} \to S^{g} \to J$, where $g=\mu(J)$. 

Now Proposition~\ref{end}(iii) together with   $\tr(\omega_R)= \mathcal{C}_{\widetilde{R}/R}$ imply that $\omega_R^{-1}$ and $\widetilde{R}$ are
isomorphic $R$-modules, and they must have the same number of minimal generators over $R$.
Hence $\dim_{R/\mm}(\widetilde{R}/\mm \widetilde{R})=\mu(\omega_R^{-1})  \leq g$, by  \cite[Corollary 3.4]{HHS}.

As $\widetilde{R}$ is a discrete valuation ring, there exists $f\in \mm$ such that $\mm \widetilde{R}=f\widetilde{R}$.
Since $\widetilde{R}$ is a finitely generated $R$-module of rank $1$, it follows  that $e(R)=e(\widetilde{R})$,
see \cite[Corollary 4.7.9]{BH}.
Now $e(\widetilde{R})=\dim_{R/\mm}\mm^k\widetilde{R}/\mm^{k+1}\widetilde{R}$ for $k\gg 0$.
Since $\mm^k\widetilde{R}/\mm^{k+1}\widetilde{R}=f^k\widetilde{R}/f^{k+1}\widetilde{R}\iso \widetilde{R}/f\widetilde{R} \iso \widetilde{R}/\mm\widetilde{R}$,
we conclude from the above considerations that  $e(R)\leq g$, as desired.

If $R$ is an almost complete intersection, then $\mu(J)=\height (J)+1= \dim  S$, hence  $e(R) \leq 3$. 
On the other hand, a celebrated inequality of Abhyankar gives  $3=\embdim R \leq e(R)+\dim(R)-1=e(R)$. Thus $e(R)=3$, as desired.
\end{proof}

\medskip
{\bf Acknowledgement}.
We gratefully acknowledge the use of  SINGULAR (\cite{Sing}) and of the numericalsgps package (\cite{Num-semigroup})  in GAP (\cite{GAP}) for our computations.

Dumitru Stamate  was partly supported by a fellowship at the Research Institute of the University of Bucharest (ICUB) and by the University of Bucharest, Faculty of Mathematics and Computer Science through the 2019 Mobility Fund.
\medskip

{}
\end{document}